\theoremstyle{definition}
\newtheorem{theorem}{Theorem}[section]
\newtheorem{lemma}[theorem]{Lemma}
\newtheorem{corollary}[theorem]{Corollary}
\newtheorem{Remark}[theorem]{Remark}
\newenvironment{remark}{\begin{Remark}\rm}{\end{Remark}}
\newtheorem{Example}[theorem]{Example}
\newtheorem{Question}[theorem]{Question}
\numberwithin{equation}{section}
\title{Equal masses results for choreographies $n$-body problems}
\author{Pieter Tibboel \\
Department of Mathematical Sciences\\
Xi'an Jiaotong-Liverpool University\\
Suzhou, China\\
Pieter.Tibboel@xjtlu.edu.cn}
\begin{document}
\maketitle
\begin{abstract}
  We prove that equally spaced choreography solutions of a large class of $n$-body problems including the classical $n$-body problem and a subset of quasi-homogeneous $n$-body problems, have equal masses if the dimension of the space spanned by the point masses is $n-1$, $n-2$, or, if $n$ is odd, if the dimension is $n-3$. If $n$ is even and the dimension is $n-3$, then all masses with an odd label are equal and all masses with an even label are equal. Additionally, we prove that the same results hold true for any solution of an $n+1$-body problem for which $n$ of the point masses behave like an equally spaced choreography and the $n+1$st point mass is fixed at the origin. Furthermore, we deduce that if the curve along which the point masses of a choreography move has an axis of symmetry, the masses have to be equal if $n=3$ and that if $n=4$, if three of the point masses behave as stated and the fourth mass is fixed at a point, the masses of the first three point masses are all equal. Finally, we prove for the $n$-body problem in spaces of negative constant Gaussian curvature that if $n<6$, $n\neq 4$, equally spaced choreography solutions have to have equal masses, and for $n=4$ the even labeled masses are equal and the odd labeled masses are equal and that the same holds true for the $n$-body problem in spaces of positive constant Gaussian curvature, as long as the point masses do not move along a great circle. Additionally, we show that these last two results are also true for any solution to the $n+1$-body problem in spaces of negative constant Gaussian curvature and the $n+1$-body problem in spaces of positive constant Gaussian curvature respectively, for the case that $n$ of the point masses behave like an equally spaced choreography and the $n+1$st is fixed at a point.
\end{abstract}
\maketitle

\section{Introduction}
  In this paper we will consider two types of $n$-body problems, the first of which being the problem of finding the orbits of  point masses $q_{1}$,...,$q_{n}\in\mathbb{R}^{d}$, $d\in\mathbb{N}$, and respective  masses $m_{1}>0$,...,\textrm{ }$m_{n}>0$ as described by the system of differential equations
  \begin{align}\label{Equations of motion}
    \ddot{q}_{k}=\sum\limits_{\substack{j=1\\j\neq k}}^{n}m_{j}(q_{j}-q_{k})f\left(\|q_{j}-q_{k}\|^{2}\right),\textrm{ }k\in\{1,...,n\},
  \end{align}
  where $\|\cdot\|$ is the Euclidean norm, $f:\mathbb{R}_{>0}\rightarrow\mathbb{R}_{>0}$ is a positive-valued scalar function and $\sqrt{x}f(x)$ is a decreasing function. The study of $n$-body problems of this type has applications to, for example, atomic physics, celestial mechanics, chemistry, crystallography,
  differential equations and dynamical systems (see for example \cite{AbrahamMarsden}--\cite{D0}, \cite{DMS}, \cite{DPS4}, \cite{J}--\cite{PV} and the references therein).
  The second $n$-body problem we will investigate is the $n$-body problem in spaces of constant Gaussian curvature, or curved $n$-body problem for short, which can be formulated as follows:
    Let $\sigma=\pm 1$. The $n$-body problem in spaces of constant Gaussian curvature is the problem of finding the dynamics of point masses \begin{align*}q_{1},...,\textrm{ }q_{n}\in\mathbb{M}_{\sigma}^{2}=\{(x_{1},x_{2},x_{3})\in\mathbb{R}^{3}|x_{1}^{2}+x_{2}^{2}+\sigma x_{3}^{2}=\sigma\},\end{align*} with respective masses $m_{1}>0$,..., $m_{n}>0$, determined by the system of differential equations
  \begin{align}\label{EquationsOfMotion Curved}
   \ddot{q}_{k}=\sum\limits_{\substack{j=1\\j\neq k}}^{n}\frac{m_{j}(q_{j}-\sigma(q_{k}\odot q_{j})q_{k})}{(\sigma -\sigma(q_{k}\odot q_{j})^{2})^{\frac{3}{2}}}-\sigma(\dot{q}_{k}\odot\dot{q}_{k})q_{k},\textrm{ }k\in\{1,...,\textrm{ }n\},
  \end{align}
  where for $x$, $y\in\mathbb{M}_{\sigma}^{2}$  the product $\cdot\odot\cdot$ is defined as
  \begin{align*}
    x\odot y=x_{1}y_{1}+x_{2}y_{2}+\sigma x_{3}y_{3}.
  \end{align*}
  While the curved $n$-body problem for $n=2$ goes back as far as the 1830s, a working model for the $n\geq 2$ case was not found until 2008 by Diacu, P\'erez-Chavela and Santoprete (see \cite{DPS1}, \cite{DPS2} and \cite{DPS3}). This breakthrough then gave rise to further results for the $n\geq 2$ case in \cite{D1}--\cite{DK}, \cite{DPo}, \cite{DSS}, \cite{DT} and \cite{PS}--\cite{ZZ}. See \cite{BM}, \cite{BMK}, \cite{DK}, \cite{DPS1}, \cite{DPS2} and \cite{DPS3} for a historical overview. The study of the curved $n$-body problem has applications to for example geometric mechanics, Lie groups and algebras, non-Euclidean and differential geometry and stability theory, the theory of polytopes and topology (see for example \cite{D6}). A particular use of the curved $n$-body problem is that it may give information about the geometry of the universe. For example: Diacu, P\'erez-Chavela and Santoprete  showed that the configuration of the Sun, Jupiter and the Trojan asteroids cannot exist in curved space (see \cite{DPS1}, \cite{DPS2}). \newline

  By a choreography, or choreographic solution to (\ref{Equations of motion}), or (\ref{EquationsOfMotion Curved}) we mean any solution $q_{1},...,q_{n}$ for which there exists a twice-continuously differentiable periodic vector-valued function $p(t)$ and constants $h_{1},...,h_{n}$ such that $q_{k}(t)=p(t+h_{k})$, $k\in\{1,...,n\}$. For ease of notation, we define $h_{k+Kn}=h_{k}+K\widehat{P}$ for $k\in\{1,...,n\}$, where $\widehat{P}$ is the period of $p$, $K\in\mathbb{Z}$. If $h_{k+1}-h_{k}$ is independent of $k$, then we call a choreographic solution an \textit{equally spaced choreography}, or \textit{equally spaced choreographic solution}. Examples of choreographies are the well-known relative equilibrium where $n$ masses move along a circle, evenly distributed as if they were the vertices of a regular polygon (see for example \cite{AEP}, \cite{CHG}, \cite{T5} and \cite{T7} and the references therein), the famous figure eight, first discovered numerically by Moore (see \cite{Moore}) and independently discovered and proved by Chenciner and Montgomery (see \cite{CM}), which gave rise to the discovery of additional families of choreographies (see for example \cite{CGMS}, \cite{FFO}, \cite{S}, \cite{Mon}, \cite{Y} and the references therein) and numerically discovered choreographies for the curved $n$-body problem (see \cite{MG}, \cite{Mont}), of which the figure eight solution on $\mathbb{S}^{2}$ was analytically proven in \cite{SZ}. Chenciner proved in \cite{C} for general $n$ that for any equally spaced choreographic solution to the classical $n$-body problem, i.e.  $f(x)=x^{-\frac{3}{2}}$ (though it should be noted that his proof does not depend on the choice of $f$), with unequal masses, the same choreographic solution solves the $n$-body problem if all masses are taken equal. He then coined equally spaced choreographic solutions with unequal masses \textit{perverse choreographic solutions} and proved that if $n<6$, such solutions do not exist if the choreography lies in a plane. He additionally stated that choreographies also exist in three-space and that the problem of their perversity is completely open. For the case that a choreographic solution is not assumed to be equally spaced, nothing is known regarding whether the masses are equal even for $n=3$.
  \begin{remark}
    Throughout this paper, when we say that $d$ has a specific value $k\in\mathbb{N}$, we mean that there exists a set $S\subset\left[0,\widehat{P}\right]$ such that the point masses $q_{1}(t)$,...,$q_{n}(t)$ span a $k$-dimensional space for any $t\in S\subset\left[0,\widehat{P}\right]$, where $S$ has Lebesgue measure $\widehat{P}$.
  \end{remark}
  In this paper we will investigate the existence of perverse choreographies in higher-dimensional spaces, in spaces of constant Gaussian curvature and prove that if the curve along which the point masses of a choreography move has an axis of symmetry, the masses have to be equal for $n=3$. Specifically, we will prove the following results:
  \begin{theorem}\label{Main Theorem 1}
    Let $q_{1},...,q_{n}$ be an equally spaced choreographic solution of (\ref{Equations of motion}). If $d=n-1$, $d=n-2$, or if $n$ is odd and $d=n-3$, all masses are equal and if $n$ is even and $d=n-3$, all masses with an odd label are equal and all masses with an even label are equal.
  \end{theorem}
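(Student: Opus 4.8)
Sketch of proof (strategy). The plan is to run a discrete Fourier analysis in the particle index, combine it with the center‑of‑mass integral, and — in the two borderline cases — with the equations of motion themselves. Since the spacings $h_{k+1}-h_k$ are all equal and $h_{k+n}=h_k+\widehat{P}$, each equals $\tau:=\widehat{P}/n$, and after translating time I may assume $q_k(t)=p(t+(k-1)\tau)$, so $q_{k+1}(t)=q_k(t+\tau)$. With $\omega=e^{2\pi i/n}$ set $w_r(t)=\tfrac1n\sum_{k=1}^n q_k(t)\,\omega^{-r(k-1)}$ and $\widehat m_r=\tfrac1n\sum_{k=1}^n m_k\,\omega^{-r(k-1)}$, so that $q_k=\sum_r w_r\,\omega^{r(k-1)}$ and $m_k=\sum_r\widehat m_r\,\omega^{r(k-1)}$; then $m_1=\dots=m_n$ iff $\widehat m_r=0$ for every $r\ne0$, and the set $R:=\{r\in\{1,\dots,n-1\}:\widehat m_r\ne0\}$ is invariant under $r\mapsto n-r$ because the masses are real. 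From $q_{k+1}(t)=q_k(t+\tau)$ one gets the quasi‑periodicity $w_r(t+\tau)=\omega^r w_r(t)$; equivalently the Fourier spectrum of $w_r$ lies in $\{\ell\in\mathbb Z:\ell\equiv r\pmod{n}\}$, so the $w_r$ have pairwise disjoint spectra. The same quasi‑periodicity will hold for $\widehat m_r$ and for the Fourier coefficients (in $k$) of the interaction terms.

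First I would extract the linear consequences. Weighting (\ref{Equations of motion}) by $m_k$ and summing, antisymmetry gives $\sum_k m_k\ddot q_k=0$, so $\sum_k m_k q_k$ is affine in $t$, hence constant by periodicity; after a translation $\sum_k m_k q_k\equiv 0$, i.e.\ $\sum_r\overline{\widehat m_r}\,w_r\equiv0$. By disjointness of spectra each summand vanishes, so $w_0\equiv0$ and $w_r\equiv0$ for every $r\in R$; writing $A:=\{1,\dots,n-1\}\setminus R$ we get $q_k=\sum_{r\in A}w_r\,\omega^{r(k-1)}$, and since $w_r=\tfrac1n\sum_k q_k\omega^{-r(k-1)}$ the span of $q_1(t),\dots,q_n(t)$ coincides with that of $\{w_r(t):r\in A\}$, whence $d\le|A|$, i.e.\ $|R|\le n-1-d$. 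Inverting the DFT of the mass vector, $R=\varnothing$ forces all masses equal, while $R=\{n/2\}$ (possible only for $n$ even) forces $m_k=\widehat m_0+(-1)^{k-1}\widehat m_{n/2}$ with $\widehat m_{n/2}$ real, i.e.\ the odd‑labelled masses are all equal and the even‑labelled ones are all equal. Now the cases: for $d=n-1$ we get $|R|=0$; for $d=n-2$, $|R|\le1$, and a one‑element set with $\{r\}=\{n-r\}$ forces $r=n/2$ and $n$ even, so $R=\varnothing$ unless $n$ is even with $R=\{n/2\}$; for $d=n-3$, $|R|\le2$, and a set of size $\le2$ invariant under $r\mapsto n-r$ is $\varnothing$, $\{n/2\}$ ($n$ even), or $\{r_0,n-r_0\}$ with $\omega^{r_0}\ne\pm1$. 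The $\varnothing$ and $\{n/2\}$ options already give the asserted conclusions, so everything reduces to excluding the two ``tight'' possibilities $R=\{n/2\}$ with $d=n-2$, and $R=\{r_0,n-r_0\}$ with $\omega^{r_0}\ne\pm1$ and $d=n-3$; in both of them $|A|=d$, so $\{w_r(t):r\in A\}$ is a basis of $\mathbb C^d$ for $t$ in a set of full measure.

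The hard part is this exclusion. Put $\Phi_s(v)=f\bigl(\|p(v+s\tau)-p(v)\|^2\bigr)$ and expand $\Phi_s(t+(k-1)\tau)$ as a Fourier series in $k$, whose coefficients have the same $t\mapsto t+\tau$ quasi‑periodicity. Applying the DFT in $k$ to (\ref{Equations of motion}) produces, schematically, $\ddot w_r=\sum_{a,b}\widehat m_b\,K_{r,a,b}(t)\,w_a(t)$ with $K_{r,a,b}$ assembled from the Fourier coefficients of the $\Phi_s$ and obeying $K_{r,a,b}(t+\tau)=\omega^{\,r-a-b}K_{r,a,b}(t)$. Replacing $t$ by $t+j\tau$, using the three quasi‑periodicities and subtracting the original relation, one gets $\sum_{a\in A}\bigl(\sum_{b\in R}\widehat m_b(1-\omega^{-jb})K_{r,a,b}(t)\bigr)w_a(t)=0$ for all integers $j$ and all $r,t$; since $\{w_a(t):a\in A\}$ is a basis each inner sum vanishes, and a short linear‑algebra argument in $j$ (using $\omega^{r_0}\ne\pm1$, or for $R=\{n/2\}$ simply $1-\omega^{-n/2}=2$) separates the one or two terms and gives $K_{r,a,b}(t)\equiv0$ for all $r$, all $a\in A$, all $b\in R$. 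Reconstituting the Fourier series in $k$, this says $P_v(c):=\sum_{s=1}^{n-1}\omega^{cs}\Phi_s(v)$ satisfies $P_v(a+b)=P_v(b)$ for all $a\in A$, $b\in R$; letting $a$ sweep $A$ and combining the two values of $b$ forces $P_v(c)$ to be a (real) constant for every $c\not\equiv0$ — in the degenerate subcases where $\omega^{r_0}$ has order $3$ or $4$ the residue bookkeeping closes up only after also invoking the equations for the dead modes $w_r\equiv0$, $r\in R\cup\{0\}$, which give $K_{r,a,0}\equiv0$ and supply the missing values. Inverting the DFT in $s$ and using $\Phi_0\equiv0$ then forces $\Phi_1(v)=\dots=\Phi_{n-1}(v)$ for all $v$. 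Since $\sqrt{x}f(x)$ is decreasing, $f$ is injective (if $a<b$ and $f(a)=f(b)$ then $\sqrt a\,f(a)<\sqrt b\,f(b)$, contradicting monotonicity), so each $q_k(t)$ is equidistant from all other $q_j(t)$; hence all mutual distances coincide, the configuration is a regular simplex on $n$ vertices, and $d\ge n-1$, contradicting $d\in\{n-2,n-3\}$. So the tight cases cannot occur and Theorem~\ref{Main Theorem 1} follows. The real work is concentrated in this last step — performing the DFT of the equations of motion cleanly and settling the low‑order arithmetic exceptions — while the rest is bookkeeping with the quasi‑periodicity and elementary linear algebra.
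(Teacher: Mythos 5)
Your overall architecture is sound and genuinely different in organization from the paper's: you use the centre-of-mass identity to kill the configuration modes $w_r$ for $r\in R\cup\{0\}$ and thereby get the clean bound $|R|\le n-1-d$, which disposes of $d=n-1$ outright and reduces $d=n-2$ and $d=n-3$ to excluding the symmetric sets $R=\{n/2\}$ and $R=\{r_0,n-r_0\}$; the paper instead treats $d=n-2$ by directly comparing coefficients of the two identities of its Lemma~\ref{Lemma 1} along $n-2$ linearly independent vectors $\Delta_j$, and only brings in the mass DFT for $d=n-3$. Your exclusion step --- DFT of the equations of motion, $K_{r,a,b}\equiv0$, hence $P_v(a+b)=P_v(b)$, hence all $\Phi_s$ equal, hence a regular simplex of affine dimension $n-1$ --- does check out for $R=\{n/2\}$ with $d=n-2$, and for $R=\{r_0,n-r_0\}$ whenever $(r_0+A)\cup(-r_0+A)$ covers all nonzero residues; in particular the order-$3$ subcase ($3r_0\equiv0$) that you flag as degenerate actually closes under your own bookkeeping, since then $r_0+A=-r_0+A=A\neq\varnothing$ and $P_v$ is forced to be constant on all of $\mathbb{Z}_n\setminus\{0\}$.

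The genuine gap is the subcase $4\mid n$, $R=\{n/4,3n/4\}$, $d=n-3$ (first instance $n=8$, $d=5$). There $(r_0+A)\cup(-r_0+A)=\mathbb{Z}_n\setminus\{0,n/2\}$, and since $n/2-b\in R$ for both $b\in R$, the relations $P_v(a+b)=P_v(b)$ with $a\in A$ never reach $c=n/2$; you only get that $P_v$ is constant off $\{0,n/2\}$, hence that $\Phi_s(v)$ depends only on the parity of $s$ --- which is \emph{not} a regular simplex, so your endgame does not apply. Your proposed patch does not fill this: the dead-mode equations give $K_{r,a,0}\equiv0$ only for $r\in R\cup\{0\}$, which is information about individual time-Fourier components $\phi_{s,r-a}$ of $P_v(a)-P_v(0)$ and never pins down $P_v(n/2)$. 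Note that this case must genuinely be excluded, since $R=\{n/4,3n/4\}$ corresponds to masses repeating with period $4$, which the theorem's conclusion forbids. The hole is fillable --- for instance, a configuration consisting of two regular simplices on $n/2$ vertices each with all cross-distances equal has centred Gram matrix of rank at least $n-2$, contradicting $d=n-3$ --- but as written your argument is incomplete exactly where the paper's own (admittedly terse) coefficient-comparison argument for $d=n-3$ does its work.
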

  \begin{corollary}\label{Main Corollary 1}
    Let $q_{1},...,q_{n+1}$ be a solution of (\ref{Equations of motion}), where $q_{1},...,q_{n}$ are an equally spaced choreography and $q_{n+1}=0$. If $d=n-1$, $d=n-2$, or $n$ is odd and $d=n-3$, then the first $n$ masses are equal and if $n$ is even and $d=n-3$, then the first $\frac{n}{2}$ masses with an odd label are equal and the first $\frac{n}{2}$ masses with an even label are equal.
  \end{corollary}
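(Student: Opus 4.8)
The plan is to deduce Corollary \ref{Main Corollary 1} from the mechanism behind Theorem \ref{Main Theorem 1}, after observing that pinning $q_{n+1}$ at the origin only adds a radial term to the equations governing $q_{1},\dots,q_{n}$ and that this term is invisible to the relevant part of the argument. Since $q_{n+1}\equiv 0$, the equation of motion for the $k$-th body, $k\in\{1,\dots,n\}$, becomes
\begin{align*}
  \ddot{q}_{k}=\sum\limits_{\substack{j=1\\j\neq k}}^{n}m_{j}(q_{j}-q_{k})f\!\left(\|q_{j}-q_{k}\|^{2}\right)-m_{n+1}q_{k}f\!\left(\|q_{k}\|^{2}\right),
\end{align*}
i.e. the system (\ref{Equations of motion}) for $n$ bodies together with one extra central force. (The equation for $q_{n+1}$ itself only imposes the constraint $\sum_{j=1}^{n}m_{j}q_{j}f(\|q_{j}\|^{2})\equiv 0$, which is not needed below.) Writing the equally spaced choreography as $q_{k}(t)=p(t+h_{k})$ with $h_{k+1}-h_{k}$ independent of $k$, the extra term is $-m_{n+1}p(t+h_{k})f(\|p(t+h_{k})\|^{2})=:g(t+h_{k})$, the time-translate by $h_{k}$ of the \emph{single} profile function $g$ --- exactly the structure already enjoyed by the choreography itself.

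Next I would run the comparison that underlies Theorem \ref{Main Theorem 1}: comparing the equation of motion for $q_{k}$ at time $t$ with that for $q_{k'}$ at time $t+h_{k}-h_{k'}$ and using $q_{j}(t+h_{k}-h_{k'})=q_{j+k-k'}(t)$ and $\ddot q_{k'}(t+h_{k}-h_{k'})=\ddot q_{k}(t)$ (indices mod $n$, with the convention $h_{k+Kn}=h_{k}+K\widehat{P}$ and $m_{k+Kn}=m_{k}$), one obtains, for every $k$ and every shift $s=k-k'$,
\begin{align*}
  \sum\limits_{\substack{i=1\\i\neq k}}^{n}(m_{i}-m_{i-s})\,(q_{i}(t)-q_{k}(t))\,f\!\left(\|q_{i}(t)-q_{k}(t)\|^{2}\right)=0 .
\end{align*}
The point is that the extra term enters the equation for $q_{k}(t)$ as $g(t+h_{k})$, and, coming from the equation for $q_{k'}$ evaluated at $t+h_{k}-h_{k'}$, as $g(t+h_{k}-h_{k'}+h_{k'})=g(t+h_{k})$; it therefore cancels identically, so the displayed relations are exactly the same as in the genuine $n$-body case. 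From here the proof of Theorem \ref{Main Theorem 1} applies verbatim: the hypothesis on $d$ bounds the dimension of the span of the vectors $(q_{i}-q_{k})f(\|q_{i}-q_{k}\|^{2})$, $i\neq k$, hence controls the solution space for the coefficients $m_{i}-m_{i-s}$, giving all masses equal when $d=n-1$, $d=n-2$, or $n$ odd and $d=n-3$, and the odd/even split when $n$ is even and $d=n-3$. Note that $d$ in the corollary refers to the span of $q_{1},\dots,q_{n}$, and adjoining the origin does not change that, so no adjustment to the dimension count is required.

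The only real obstacle is bookkeeping inside the proof of Theorem \ref{Main Theorem 1}: one must check that \emph{every} invocation of the equations of motion there is of the "compare $\ddot q_{k}(t)$ with $\ddot q_{k'}(t+h_{k}-h_{k'})$" type (or is otherwise insensitive to a radial term common to all $n$ bodies in the choreography-shifted sense), so that the $-m_{n+1}q_{k}f(\|q_{k}\|^{2})$ contributions always cancel in pairs; in particular one should verify that the borderline $d=n-3$ case analysis (even versus odd $n$) uses neither the explicit form $f(x)=x^{-3/2}$ nor a global quantity such as the centre of mass that the extra central force would perturb. The structure above strongly suggests this is so, and granting it, Corollary \ref{Main Corollary 1} follows; the same reduction also covers the variant in which $q_{n+1}$ is held fixed at an arbitrary point instead of the origin, since a translation of coordinates leaves the pairwise comparison and the common radial term intact.
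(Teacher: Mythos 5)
Your reduction of the extra body to a $k$-independent radial term $g(t+h_k)=-m_{n+1}p(t+h_k)f(\|p(t+h_k)\|^2)$ that cancels under the choreography shift is exactly the paper's mechanism (the paper subtracts the average over $k$ rather than taking pairwise differences, but the two families of relations are equivalent since the coefficient vectors sum to zero over $k$). However, there is a genuine gap in what you then feed into Theorem~\ref{Main Theorem 1}. Your comparison recovers only the analogue of (\ref{Linear combination f}), the relation \emph{with} the factor $f(\|\Delta_j\|^2)$. The proof of Theorem~\ref{Main Theorem 1} is not the single-relation dimension count you describe: for $d=n-1$ one relation suffices, but for $d=n-2$ and $d=n-3$ the argument crucially plays (\ref{Linear combination f}) off against the centre-of-mass relation (\ref{Linear combination c}) (the same linear combination of the $\Delta_j(t)$ \emph{without} the $f$ factor, i.e.\ (\ref{number 4}) versus (\ref{number 3}), and (\ref{DeltaForce2}) versus (\ref{DeltaForce1})) to force $f(\|\Delta_j(t)\|^2)$ to be independent of $j$ and hence reach the simplex contradiction. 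So the "bookkeeping" item you defer --- verifying that the $d=n-3$ analysis uses no "global quantity such as the centre of mass" --- resolves in the opposite direction from what you suggest: the centre of mass \emph{is} used, and the proof does not close without it.

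The repair is what the paper does in section~\ref{Section proof of main corollary 1}: one must check that (\ref{Linear combination c}) itself survives the addition of the fixed central body. It does, because the mutual forces among $q_1,\dots,q_n$ cancel in pairs and the central-force contributions sum to $-m\sum_{k=1}^{n}m_kq_kf(\|q_k\|^2)=-m\,\ddot q_{n+1}=0$, so $\sum_{k=1}^{n}m_kq_k$ is still affine in $t$ and hence constant by periodicity; the resulting constant, like your radial term, is $k$-independent and drops out when one subtracts the average over $k$, giving (\ref{Linear combination c}) verbatim. With both (\ref{Linear combination f}) and (\ref{Linear combination c}) in hand the proof of Theorem~\ref{Main Theorem 1} does apply unchanged; without the second relation your argument only establishes the $d=n-1$ case.
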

  \begin{theorem}\label{Main Theorem 2}
    Let $q_{1},...,q_{n}$ be a choreographic solution of (\ref{Equations of motion}). If $n=3$ and the curve along which the masses move has an axis of symmetry, then the masses $m_{1}$,$m_{2}$,$m_{3}$ are all equal.
  \end{theorem}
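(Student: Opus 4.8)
\emph{Setup.} Write $q_k(t)=p(t+h_k)$, translate time so that $h_1=0$, and set $\alpha=h_2$, $\beta=h_3$; then $0,\alpha,\beta$ are pairwise distinct modulo $\widehat P$ (because $\sqrt{x}\,f(x)$ is positive and decreasing, $f$ is singular at $0$, so collisions are impossible). Three points are always affinely dependent, so $d\le 2$, and a standard argument excludes collinearity of $q_1,q_2,q_3$ for all $t$ (it would force a collision), so $d=2$. Let $R\neq\mathrm{id}$ be an isometric involution with $R\gamma=\gamma$, where $\gamma=p(\mathbb R)$; translating, its fixed set contains the origin, so $R$ is linear. Then $Rp=p\circ\phi$ for a continuous involution $\phi$ of $\mathbb R/\widehat P\mathbb Z$, which — being non-trivial, since $\gamma$ is not contained in the axis — is either orientation-reversing with two fixed points or orientation-preserving of rotation number $\tfrac12$; after one further time translation we normalise to
\[
  \text{(A)}\quad Rp(t)=p(-t)\qquad\text{or}\qquad\text{(B)}\quad Rp(t)=p\bigl(t+\tfrac{\widehat P}{2}\bigr),
\]
and in case (A) the points $p(0),p(\widehat P/2)$ lie on the axis.

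\emph{A mirror solution and its consequences.} Since $R$ is an isometry and \eqref{Equations of motion} is autonomous and has no velocity terms, $Rq_k(t)$ and $q_k(-t)$ both solve \eqref{Equations of motion} with masses $m_k$. In case (A), using $Rp(t)=p(-t)$ one finds that $\rho_k(t):=Rq_k(-t)=p(t-h_k)$ is again a solution with masses $m_1,m_2,m_3$; so $p(t+h_k)$ and $p(t-h_k)$ generate solutions on the same curve with the same masses. Writing out the $k=1,2,3$ equations for both and eliminating $\ddot p$ gives, for all $t$,
\[
  m_2 S_\alpha(t)+m_3 S_\beta(t)=0,\qquad m_1 S_\alpha(t)+m_3 S_{\alpha-\beta}(t)=0,
\]
where $S_\tau(t):=\bigl(p(t+\tau)-p(t)\bigr)f(\|p(t+\tau)-p(t)\|^2)+\bigl(p(t)-p(t-\tau)\bigr)f(\|p(t)-p(t-\tau)\|^2)$ (a third relation $m_1S_\beta=m_2S_{\alpha-\beta}$ then follows). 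Hence $S_\alpha,S_\beta,S_{\alpha-\beta}$ are everywhere parallel, with ratios $-m_2/m_3$ and $-m_1/m_3$.

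\emph{Reduction to an equally spaced choreography.} The plan is next to show that this parallelism, together with the conservation of the centre of mass — which forces $m_1+m_2e^{i\nu\omega\alpha}+m_3e^{i\nu\omega\beta}=0$, $\omega=2\pi/\widehat P$, for every Fourier frequency $\nu$ present in $p$ — and with the non-degeneracy $d=2$ (which keeps $S_\alpha\not\equiv 0$ and $S_\alpha,S_\beta$ genuinely independent as chord-combinations of different ``lengths''), forces $\{0,\alpha,\beta\}\equiv\{0,\widehat P/3,2\widehat P/3\}$, i.e.\ the choreography is equally spaced. Granting this, $d=2=n-1$ and Theorem~\ref{Main Theorem 1} give $m_1=m_2=m_3$. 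Case (B) is treated in the same spirit, the glide relation $Rq_k(t)=q_k(t+\widehat P/2)$ (which in particular forces the angular momentum to vanish) playing the role of the two axis-crossings.

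\emph{The main obstacle} is this reduction. Neither the centre-of-mass conditions nor the mirror identities alone pin down more than the ratio $m_1:m_2:m_3$ and the cyclic shape of $\{0,\alpha,\beta\}$; one must combine several present frequencies and exploit the strict monotonicity of $\sqrt{x}\,f(x)$ — which recovers the scalar factor $f(\|\cdot\|^2)$ from the chord length and thereby rules out the $S_\tau$'s being parallel for a ``spurious'' reason — before the configuration is forced to be equally spaced (and the degenerate alternatives $m_i+m_j=m_k$ are eliminated via $d=2$). Obtaining the normal form $Rp(t)=p(\pm t)$, resp.\ $p(t+\widehat P/2)$, also requires a short argument, since only time translations, not arbitrary reparametrisations, preserve the choreographic structure.
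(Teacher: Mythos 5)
Your write-up is a plan rather than a proof: the decisive step is explicitly deferred. You reduce everything to the claim that the parallelism of the chord combinations $S_\alpha, S_\beta, S_{\alpha-\beta}$, together with the centre-of-mass Fourier conditions, forces $\{0,\alpha,\beta\}\equiv\{0,\widehat P/3,2\widehat P/3\}$, and then you would invoke Theorem~\ref{Main Theorem 1} with $d=n-1=2$. But you never establish that reduction --- you yourself label it ``the main obstacle'' --- and it is a genuinely strong intermediate claim (equal spacing is nowhere asserted in the statement of Theorem~\ref{Main Theorem 2}, and nothing in your identities obviously rules out a non-equally-spaced axisymmetric choreography with, say, two equal masses). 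Case (B), the orientation-preserving symmetry, is likewise dismissed with ``treated in the same spirit'' and no argument. As it stands the proposal proves nothing beyond the mirror identities themselves.

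The paper takes a different and more direct route that avoids equal spacing entirely. It reflects the equation of motion for index $k$ and subtracts it from the unreflected equation for a \emph{different} index $l$ (Lemma~\ref{Lemma 2}), so that for $n=3$ the left-hand side is $(m_i-m_k)\bigl(p(t+h_k-h_i)-p(t)\bigger)$ times either $f(\|\cdot\|^{2})$ or $1$, and the right-hand side is a single difference of two chord vectors weighted by $m_j$. Comparing the force-law version with the centre-of-mass version of the same identity (equations (\ref{Boom1})--(\ref{Boom4})), and using that $xf(x^{2})$ is decreasing, the paper argues the two chords on the right must have equal length, places the relevant points on two circles of equal radius about $p(t)$ and about $p(t+(h_k-h_i))$, and concludes the right-hand sides must vanish identically, whence $m_i=m_k$ directly. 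Your identities ($k$-equation minus its own mirror, giving $m_2S_\alpha+m_3S_\beta=0$ with no mass-difference term) are weaker for this purpose precisely because the mass differences never appear in them. To salvage your approach you would need to actually carry out the reduction to equal spacing, which is a substantial open step, not a routine one.
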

  \begin{corollary}\label{Main Corollary 2}
    Let $q_{1},...,q_{n+1}$ be a solution of (\ref{Equations of motion}), where $q_{1},...,q_{n}$ are a choreography and $q_{n+1}=0$. If $n=3$ and the curve along which the masses move has an axis of symmetry and $q_{n+1}=0$, then the masses $m_{1}$,$m_{2}$,$m_{3}$ are all equal.
  \end{corollary}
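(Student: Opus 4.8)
The plan is to adapt the proof of Theorem~\ref{Main Theorem 2} to the $(n+1)$-body configuration, treating the mass fixed at the origin as the source of an additional central force that turns out to be inert for the argument. First I would write out (\ref{Equations of motion}) for $n+1=4$ with $q_{4}\equiv 0$. The equation for $q_{4}$ reduces to
\begin{align*}
\sum_{j=1}^{3}m_{j}q_{j}(t)f\!\left(\|q_{j}(t)\|^{2}\right)=0\qquad\text{for all }t,
\end{align*}
while the equations for $q_{1},q_{2},q_{3}$ become
\begin{align*}
\ddot q_{k}=\sum_{\substack{j=1\\ j\neq k}}^{3}m_{j}(q_{j}-q_{k})f\!\left(\|q_{j}-q_{k}\|^{2}\right)-m_{4}\,q_{k}f\!\left(\|q_{k}\|^{2}\right),\qquad k\in\{1,2,3\},
\end{align*}
that is, the $3$-body equations (\ref{Equations of motion}) together with an extra term $-m_{4}q_{k}f(\|q_{k}\|^{2})$, a central force along the line through the origin and $q_{k}$.

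Next I would recover the structural facts used in the proof of Theorem~\ref{Main Theorem 2}. Summing the three equations above and inserting the first displayed identity gives $\sum_{k=1}^{3}m_{k}\ddot q_{k}=-m_{4}\sum_{k=1}^{3}m_{k}q_{k}f(\|q_{k}\|^{2})=0$; since $q_{1},q_{2},q_{3}$ form a choreography, $\sum_{k=1}^{3}m_{k}q_{k}$ is periodic, hence a constant, so the centre of mass of the first three bodies is fixed, exactly as in the unperturbed $3$-body setting. Moreover the added term $-m_{4}q_{k}f(\|q_{k}\|^{2})$ is equivariant under every orthogonal transformation fixing the origin, so if $R$ denotes the reflection whose fixed set is the axis of symmetry of the curve (which here passes through the origin, the location of the fixed mass), then $(Rq_{1},Rq_{2},Rq_{3})$ is again a choreography on the same curve solving the same system. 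Hence every symmetry of the trajectory and every linear relation among the $q_{k}$ that the proof of Theorem~\ref{Main Theorem 2} exploits is still at our disposal.

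With these ingredients the argument of Theorem~\ref{Main Theorem 2} now runs essentially verbatim: one uses $q_{k}(t)=p(t+h_{k})$ together with the reflection $R$ to relate the three bodies, and then reads the masses off from the motion and the centre-of-mass relation, carrying the extra term $-m_{4}q_{k}f(\|q_{k}\|^{2})$ along at each step (it can be moved to the left-hand side and absorbed into $\ddot q_{k}$, or simply kept as a known inhomogeneity), to conclude $m_{1}=m_{2}=m_{3}$; Corollary~\ref{Main Corollary 2} follows. I expect the main obstacle to be exactly this bookkeeping --- verifying that $-m_{4}q_{k}f(\|q_{k}\|^{2})$ does not interfere with the step of Theorem~\ref{Main Theorem 2} in which the masses are identified from the trajectory, and justifying that the axis of symmetry of the curve indeed passes through the fixed mass and that the degenerate, lower-dimensional configurations are ruled out just as in the theorem. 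Since $f$ itself is unchanged, the standing hypotheses on $f$ are automatically preserved throughout.
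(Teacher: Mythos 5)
Your proposal follows essentially the same route as the paper: write the modified equations with the extra central term $-m_{4}q_{k}f(\|q_{k}\|^{2})$, observe that this term is equivariant under the reflection fixing the origin so that it cancels when the reflected equation is subtracted from the original, recover the identities of Lemma~\ref{Lemma 2}, and then rerun the proof of Theorem~\ref{Main Theorem 2}. The ``bookkeeping'' you defer is exactly what the paper carries out explicitly --- after substituting $s=-t$, applying (\ref{reflection identity}) and multiplying by the inverse reflection, the term $m\,p(t)f(\|p(t)\|^{2})$ transforms like $\ddot{p}(t)$ and drops out of the difference, yielding (\ref{Equations of motion choreography result 1}) verbatim --- so your outline is correct.
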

  \begin{theorem}\label{Main Theorem 3}
    Let $q_{1},...,q_{n}$ be an equally spaced choreographic solution of (\ref{EquationsOfMotion Curved}). If $\sigma=-1$, then if $n<6$, $n\neq 4$, all masses are equal and if $n=4$, then the even labeled masses are equal and the odd labeled masses are equal. If $\sigma=1$, the same holds true, unless the point masses move along a great circle.
  \end{theorem}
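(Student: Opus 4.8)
The plan is to adapt the mechanism behind Theorem~\ref{Main Theorem 1} and Corollary~\ref{Main Corollary 1} to the curved force, the one genuinely new ingredient being a dimension count peculiar to $\mathbb{M}_{\sigma}^{2}$. Normalising time so that $h_{k}=(k-1)\widehat{P}/n$, substitute $q_{k}(t)=p(t+h_{k})$ into (\ref{EquationsOfMotion Curved}) and put $s=t+h_{k}$; writing $l$ for $j-k$ modulo $n$ and reading all mass indices modulo $n$, the $k$-th equation becomes
\begin{align*}
  \ddot{p}(s)+\sigma\bigl(\dot{p}(s)\odot\dot{p}(s)\bigr)p(s)=\sum_{l=1}^{n-1}m_{k+l}\,\Phi_{l}(s),\qquad
  \Phi_{l}(s):=\frac{p(s+l\widehat{P}/n)-\sigma c_{l}(s)\,p(s)}{\bigl(\sigma-\sigma c_{l}(s)^{2}\bigr)^{3/2}},
\end{align*}
where $c_{l}(s):=p(s)\odot p(s+l\widehat{P}/n)$. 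Its left-hand side does not depend on $k$ --- the velocity term is radial and common to every body, playing exactly the part that the force of a mass fixed at the centre of $\mathbb{M}_{\sigma}^{2}$ plays in Corollary~\ref{Main Corollary 1} --- so subtracting the $k$-th and $(k+1)$-st equations gives
\begin{align*}
  \sum_{l=1}^{n-1}\delta_{k+l}\,\Phi_{l}(s)=0\qquad\text{for all }k\in\{1,\dots,n\}\text{ and all }s,
\end{align*}
with $\delta_{k}:=m_{k+1}-m_{k}$ ($n$-periodic, $\sum_{k}\delta_{k}=0$). This is exactly the homogeneous system appearing in the proof of Theorem~\ref{Main Theorem 1}; its coefficients depend only on $j-k$, so a discrete Fourier transform over $\mathbb{Z}/n\mathbb{Z}$ shows that each frequency $r$ forces either $\widehat{\delta}(\omega^{r})=0$ or the vector-valued function $\sum_{l}\omega^{rl}\Phi_{l}(\cdot)$ to vanish identically, where $\omega=e^{2\pi i/n}$.

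The crucial point is that $\Phi_{l}(s)\odot p(s)=0$: the numerator of this scalar equals $p(s+l\widehat{P}/n)\odot p(s)-\sigma c_{l}(s)\bigl(p(s)\odot p(s)\bigr)=c_{l}(s)-\sigma^{2}c_{l}(s)=0$ since $\sigma^{2}=1$. Hence every $\Phi_{l}(s)$ lies in $T_{p(s)}\mathbb{M}_{\sigma}^{2}\cap V$, where $V\subseteq\mathbb{R}^{3}$ is the linear span of the configuration; since $p(s)\in V$ is non-null, this intersection has dimension $\dim V-1\le 2$, and equals $2$ unless the point masses lie on a geodesic of $\mathbb{M}_{\sigma}^{2}$. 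For $\sigma=1$ a geodesic is a great circle; for $\sigma=-1$ it is non-closed, so a collision-free equally spaced choreography cannot lie on one and $\dim V=3$ automatically. Thus the system is an instance of the situation treated for Theorem~\ref{Main Theorem 1}, with the ambient dimension replaced by the effective dimension $\dim V-1\le 2$, and I would run the same frequency analysis.

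Going through the cases: for $n=3$ the effective dimension is $2=n-1$ (or $1=n-2$ if the bodies lie on a geodesic, which for the odd value $n=3$ still forces $\widehat{\delta}$ to vanish off frequency $0$), so all masses are equal; for $n=2$ the system is simply $(m_{1}-m_{2})\Phi_{1}(s)=0$, so $m_{1}=m_{2}$; for $n=5$ the effective dimension is $2=n-3$ with $n$ odd, so $\widehat{\delta}$ is again supported only at $0$ and all masses are equal; and for $n=4$ the effective dimension is at most $2=n-2$, so $\widehat{\delta}$ can be supported only on $\{0,n/2\}$, i.e.\ $\delta_{k}=\beta(-1)^{k}$ for a constant $\beta$, which is precisely the assertion that the odd-labelled masses agree with one another and the even-labelled masses agree with one another. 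If $\sigma=1$ and the bodies move along a great circle then $\dim V=2$, the $\Phi_{l}(s)$ are collinear for each $s$, and the system collapses to a single scalar relation, which no longer controls enough frequencies when $n=5$; this is why the great-circle case must be excluded in positive curvature.

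I expect the main obstacle to be importing the sharper half of the Theorem~\ref{Main Theorem 1} argument, i.e.\ actually proving that $\sum_{l}\omega^{rl}\Phi_{l}(\cdot)\not\equiv 0$ for the frequencies $r$ one needs. In the flat case the interaction vectors satisfy the clean antisymmetry $\Phi_{n-l}(s+l\widehat{P}/n)=-\Phi_{l}(s)$, and it is this that rules out the even/odd mode throughout the range $d=n-2$, so that all masses are equal there even for even $n$; on $\mathbb{M}_{\sigma}^{2}$ one only has the messier identity in which $\Phi_{l}(s)+\Phi_{n-l}(s+l\widehat{P}/n)$ picks up the factor $1-\sigma c_{l}(s)$ and the two summands live in different tangent planes, so for $n=4$ the $n/2$-mode genuinely cannot be eliminated --- which is consistent with, and forces, the weaker conclusion claimed --- while for $n\in\{2,3,5\}$ one must re-establish by hand that the surviving support of $\widehat{\delta}$ is exactly $\{0\}$. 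A subsidiary check is that off a geodesic the choreography really attains effective dimension $2$; if it collapses further the same count applies with the smaller $\dim V$, and one verifies that for $n<6$, $n\neq 4$, the oddness of $n$ still forces equal masses while for $n=4$ it forces the even/odd equality, and that the exceptional five-body configuration on a geodesic of $\mathbb{M}_{-1}^{2}$ is collisional and hence does not occur.
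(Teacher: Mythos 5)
Your setup is sound and matches the paper's: substitute the choreography, eliminate the $k$-independent velocity term by differencing/averaging over $k$, expand the mass vector in the discrete Fourier basis, and reduce everything to showing that $\sum_{l}\omega^{rl}\Phi_{l}(\cdot)$ does not vanish for the relevant frequencies; your observation that $\Phi_{l}(s)\odot p(s)=0$, so that the $\Phi_{l}(s)$ span at most a two-dimensional space (exactly two unless the bodies lie on a geodesic, whence the great-circle exclusion for $\sigma=1$), is precisely the geometric input the paper uses. But there is a genuine gap at the step where you declare the problem ``an instance of the situation treated for Theorem~\ref{Main Theorem 1} with the ambient dimension replaced by the effective dimension.'' The nontrivial cases of Theorem~\ref{Main Theorem 1} ($d=n-2$ and $d=n-3$) do not run on one linear relation: they run on the simultaneous validity of the $f$-weighted relation (\ref{Linear combination f}) and the unweighted centre-of-mass relation (\ref{Linear combination c}), comparing the two expansions via the strict monotonicity of $f$ to force either equal masses or an $(n-1)$-simplex. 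The curved problem has no centre-of-mass integral (cf.\ \cite{D5}), and Lemma~\ref{Lemma 3} supplies only the single relation (\ref{CurvedHelp}); so for $n=5$ (effective dimension $2=n-3$) the mechanism you propose to invoke simply is not available. You flag this yourself (``one must re-establish by hand that the surviving support of $\widehat{\delta}$ is exactly $\{0\}$''), but that is exactly the content of the theorem at $n=5$, and your proposal does not supply it. A second symptom of the mismatch is $n=4$: on your accounting the effective dimension is $2=n-2$, and the $d=n-2$ case of Theorem~\ref{Main Theorem 1} concludes that \emph{all} masses are equal, which is strictly more than the theorem claims (and more than is provable); the correct conclusion for $n=4$ comes from a different argument.

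For comparison, the paper's proof of the hard cases is short but uses a different device. For $n=3$ and for $n=4$ with $\lambda_{l}=\pm i$, the imaginary part of $\sum_{j}\lambda_{l}^{j-1}\Phi_{j}(t)$ reduces to a single nonzero vector ($\pm\sin(2\pi l/n)\,\Phi_{2}$, say), so (\ref{The L}) holds and $a_{l}=0$; for $\lambda_{l}=-1$ nothing can be said, which is exactly why only the even/odd equality survives at $n=4$. For $n=5$, one assumes a mode $l$ survives, uses the resulting identity to eliminate the $j=1$ term from (\ref{CurvedHelp}), and takes complex conjugates to obtain the pair (\ref{Help5 1})--(\ref{Help5 2}): two linear relations among the three tangent vectors $\Phi_{2},\Phi_{3},\Phi_{4}$ whose coefficient vectors are linearly independent precisely when $m_{1+k}\neq M/5$. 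Since those three vectors span a two-dimensional space (off a geodesic), the space of relations among them is one-dimensional, so $m_{1+k}=M/5$ for all $k$. To complete your proof you would need to replace the appeal to Theorem~\ref{Main Theorem 1} by an argument of this kind (or otherwise manufacture a second independent relation to compensate for the missing centre-of-mass identity).
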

  \begin{corollary}\label{Main Corollary 3}
    Let $q_{1},...,q_{n+1}$ be a solution of (\ref{EquationsOfMotion Curved}), where $q_{1},...,q_{n}$ is an equally spaced choreography and $q_{n+1}=(0,0,1)^{T}$. If $\sigma=-1$, then if $n<6$, $n\neq 4$, the masses of the first $n$ point masses are equal and if $n=4$, then $m_{1}=m_{3}$ and $m_{2}=m_{4}$. If $\sigma=1$, the same holds true, unless the first $n$ point masses move along a great circle.
  \end{corollary}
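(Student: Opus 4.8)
The plan is to derive Corollary \ref{Main Corollary 3} from Theorem \ref{Main Theorem 3} in exactly the way Corollary \ref{Main Corollary 1} is derived from Theorem \ref{Main Theorem 1} and Corollary \ref{Main Corollary 2} from Theorem \ref{Main Theorem 2}: by checking that pinning a point mass at the pole $q_{n+1}=(0,0,1)^{T}$ leaves the structure of the equations of motion for the remaining bodies essentially that of an equally spaced choreography for the $n$-body system, so that the argument proving Theorem \ref{Main Theorem 3} goes through unchanged. First I would note that $q_{n+1}=(0,0,1)^{T}\in\mathbb{M}_{\sigma}^{2}$, since $0+0+\sigma\cdot 1=\sigma$, and that the constant curve $q_{n+1}(t)\equiv(0,0,1)^{T}$ solves its own equation in (\ref{EquationsOfMotion Curved}): the configuration $q_{1},\dots,q_{n}$, being an equally spaced choreography, is invariant under the cyclic symmetry that fixes the $x_{3}$-axis, so the forces it exerts at the pole sum to zero and $\dot{q}_{n+1}\odot\dot{q}_{n+1}=0$.

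Next I would write out the equations of motion for $q_{k}$, $k\in\{1,\dots,n\}$: they are the $n$-body equations (\ref{EquationsOfMotion Curved}) augmented by the single extra term coming from the interaction with $q_{n+1}$, namely
$$\frac{m_{n+1}\bigl(q_{n+1}-\sigma(q_{k}\odot q_{n+1})q_{k}\bigr)}{\bigl(\sigma-\sigma(q_{k}\odot q_{n+1})^{2}\bigr)^{3/2}}.$$
Since $q_{k}\odot q_{n+1}=\sigma(q_{k})_{3}$, where $(q_{k})_{3}$ denotes the third coordinate of $q_{k}$, this term equals $m_{n+1}\bigl(\alpha_{k}(0,0,1)^{T}-\beta_{k}q_{k}\bigr)$, with $\alpha_{k}$ and $\beta_{k}$ scalar functions of $(q_{k})_{3}$ alone. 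Because $q_{1},\dots,q_{n}$ is an equally spaced choreography, $(q_{k})_{3}(t)=(q_{1})_{3}(t+h_{k}-h_{1})$, so $\alpha_{k}$ and $\beta_{k}$ are cyclic time shifts of one another; thus the extra term is transported by the cyclic symmetry of the choreography in exactly the same way as $\ddot{q}_{k}$. Moreover its $-m_{n+1}\beta_{k}q_{k}$ part is a multiple of $q_{k}$, of the same kind as the curvature term $-\sigma(\dot{q}_{k}\odot\dot{q}_{k})q_{k}$ and as the $q_{k}$-multiple produced when (\ref{EquationsOfMotion Curved}) is put into the form used to prove Theorem \ref{Main Theorem 3}, and crucially the scalar $\beta_{k}$ involves none of $m_{1},\dots,m_{n}$.

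I would then re-run the proof of Theorem \ref{Main Theorem 3} with this extra term carried along. At the step where the span-dimension hypothesis for $q_{1},\dots,q_{n}$ (their positions spanning a space of dimension $\leq 3$, and exactly $3$ precisely when they do not lie on a great circle) is combined with the equations of motion evaluated at suitable times to produce linear relations among $m_{1},\dots,m_{n}$, all $q_{k}$-multiples are annihilated simultaneously — for instance by projecting (\ref{EquationsOfMotion Curved}) onto the tangent space $T_{q_{k}}\mathbb{M}_{\sigma}^{2}=\{v:v\odot q_{k}=0\}$, which already contains the force terms — so the $-m_{n+1}\beta_{k}q_{k}$ contribution simply vanishes, while the $m_{n+1}\alpha_{k}(0,0,1)^{T}$ contribution, being linear in $m_{n+1}$ and transported by the cyclic symmetry identically to $\ddot{q}_{k}$, enters any symmetry-adapted linear combination with the same coefficient pattern as the acceleration terms and so cancels out of the mass relations. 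Hence the same dichotomy is obtained: $m_{1}=\dots=m_{n}$ when $n<6$, $n\neq 4$, and $m_{1}=m_{3}$, $m_{2}=m_{4}$ when $n=4$; and for $\sigma=1$ the great-circle case must still be excluded, because that is exactly where the span-dimension hypothesis of Theorem \ref{Main Theorem 3} fails, and a mass fixed at the pole does nothing to restore it.

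The main obstacle I anticipate is the bookkeeping at the step isolating $m_{1},\dots,m_{n}$: one must verify that the $q_{k}$-multiples are genuinely removed all at once (the tangent-space projection does this cleanly, but one should check that the proof of Theorem \ref{Main Theorem 3} can be phrased so as to use it, taking care with the signs in $\sigma-\sigma(q_{k}\odot q_{j})^{2}$ in the hyperbolic case $\sigma=-1$), and that the surviving common-vector term $m_{n+1}\alpha_{k}(0,0,1)^{T}$ truly drops out of the antisymmetric combinations of equations that force equality of masses. Both points reduce to the observation, already implicit in the passage from Theorem \ref{Main Theorem 1} to Corollary \ref{Main Corollary 1}, that a single point mass fixed at the centre of the cyclic symmetry contributes only terms that are invariant, or equivariant in the same way as $\ddot{q}_{k}$, under that symmetry, and hence do not perturb the linear algebra that pins down $m_{1},\dots,m_{n}$.
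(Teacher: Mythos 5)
Your proposal is correct and follows essentially the same route as the paper: after the substitution $s=t+k$ the interaction term with the mass fixed at $(0,0,1)^{T}$ depends on $s$ only, so summing the shifted equations over $k$, dividing by $n$ and subtracting reproduces (\ref{CurvedHelp}) verbatim, and the proof of Theorem~\ref{Main Theorem 3} then applies unchanged. The tangent-space projection and the verification that the pole satisfies its own equation of motion are unnecessary detours (the latter is already part of the hypothesis that $q_{1},\dots,q_{n+1}$ is a solution), but they do not affect the validity of the argument.
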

  \begin{remark}
    Solutions of the type discussed in Corollary~\ref{Main Corollary 1}, Corollary~\ref{Main Corollary 2} and Corollary~\ref{Main Corollary 3} exist: It is well-known that one can construct a solution of (\ref{Equations of motion}) and (\ref{EquationsOfMotion Curved}) where the $q_{1}$,...,$q_{n}$ behave as the vertices of a regular polygon rotating around $q_{n+1}=0$ and $q_{n+1}=(0,0,1)^{T}$ respectively.
  \end{remark}
  \begin{remark}
    Choreographic solutions for which $d=n-k$, $k\in\mathbb{Z}_{\geq 1}$, $k\leq n-2$, exist. For example: We know that planar choreographic solutions exist for all $n$ (see \cite{CGMS}). If a choreographic solution is planar, then $d=2$, so choosing $n=k+2$ gives that for any $k$ there is an $n$ such that $d=n-k$. The matter of what choreographies exist for $d\geq 3$ is an open problem, but regardless of that, Theorem~\ref{Main Theorem 1} and Corollary~\ref{Main Corollary 1} are a first step in investigating the perversity of choreographies if $d\geq 3$.
  \end{remark}
  \begin{remark}
    The argument used in the proof of Theorem~\ref{Main Theorem 1} to prove that if $d=n-3$, $n$ odd, all masses are equal, or take on at most two values if $n$ is even, can be applied if $d=n-4$ and $n$ is even as well, (with $n-3$ replaced with $n-4$) under the condition that we can choose a $k\in\{1,...,n-1\}$ in such a way that the number of linearly independent vectors in the linear combinations of (\ref{General equations1}) and (\ref{General equations2}) is the same as in the linear combinations of (\ref{Lead up to general equations1}) and (\ref{Lead up to general equations2}), in which case if $n$ is even and $d=n-4$ all $a_{l}$ in (\ref{Span}), $l\neq\frac{n}{2}$ have to be zero. However, if $n>6$, it is possible that there is no such $k$ and excluding that possibility is nontrivial and likely requires further knowledge on the dynamics of the choreography.
  \end{remark}
  The remainder of this paper is as follows: We will first formulate necessary notation and lemmas in section~\ref{Section Background Theory} and then prove  Theorem~\ref{Main Theorem 1} in section~\ref{Section proof of main theorem 1}, Corollary~\ref{Main Corollary 1} in section~\ref{Section proof of main corollary 1}, Theorem~\ref{Main Theorem 2} in section~\ref{Section proof of main theorem 2}, Corollary~\ref{Main Corollary 2} in section~\ref{Section proof of main corollary 2}, Theorem~\ref{Main Theorem 3} in section~\ref{Section proof of main theorem 3} and Corollary~\ref{Main Corollary 3} in section~\ref{Section proof of main corollary 3}.
  \section{Background theory}\label{Section Background Theory}
  Throughout this paper we will use the notation introduced in the previous section for choreographies. For ease of notation, we will additionally define $m_{k+Kn}=m_{k}$ for $k\in\{1,...,n\}$, $K\in\mathbb{Z}$ and if we deal with equally spaced choreographies, we will choose $h_{k+1}-h_{k}=1$ and $\widehat{P}=n$, as we can always use a rescaling argument if a choreography is equally spaced and $h_{k+1}-h_{k}\neq 1$. Furthermore, we will assume that for any choreography solution of (\ref{Equations of motion}) we have that
      \begin{align}\label{Center of mass zero}
        \sum\limits_{k=1}^{n}m_{k}q_{k}=0.
      \end{align}
      Note that by (\ref{Equations of motion}) we have that
      \begin{align*}
        \sum\limits_{k=1}^{n}m_{k}\ddot{q}_{k}&=\sum\limits_{k=1}^{n}\sum\limits_{\substack{j=1\\j\neq k}}^{n}m_{k}m_{j}(q_{j}-q_{k})f\left(\|q_{j}-q_{k}\|^{2}\right)=0,
      \end{align*}
      so there exist constant vectors $A$ and $B$ such that $\sum\limits_{k=1}^{n}m_{k}q_{k}=At+B$. But for the $q_{k}$, $k\in\{1,...,n\}$, to lie on a closed, periodic curve, we need that $A=0$ and if $B\neq 0$, we can replace the $q_{k}$, $k\in\{1,...,n\}$ in (\ref{Equations of motion}) with $\widehat{q}_{k}=q_{k}-\frac{1}{\sum\limits_{j=1}^{n}m_{j}}B$ and work with $\widehat{q}_{k}$ instead. This result is well-known, but was quickly proven to make the paper as self-contained as possible. For ease of notation, we will define $\Delta_{j}(t)=p(t+j)-p(t)$ for all $j\in\{1,...,n-1\}$ and $M=\sum\limits_{j=1}^{n}m_{j}$. Additionally, to prove Theorem~\ref{Main Theorem 1} and Theorem~\ref{Main Theorem 3} the following well-known vectors will be helpful: Let $\widehat{e}_{1}$,..., $\widehat{e}_{n}\in\mathbb{C}^{n}$, where the $j$th component of $\widehat{e}_{l}$ is $\frac{1}{\sqrt{n}}e^{\frac{2\pi li}{n}(j-1)}$, $j\in\{1,...,n\}$, $l\in\mathbb{Z}$. Let $\lambda_{l}=e^{\frac{2\pi li}{n}}$. Note that if $\widehat{B}$ is the $n\times n$ matrix for which the $j$th component of $\widehat{B}v$ is the $j+1$st component of $v$ for all vectors $v\in\mathbb{C}^{n}$, then $\widehat{B}\widehat{e}_{l}=\lambda_{l}\widehat{e}_{l}$. Additionally, note that the Euclidean inner product of $\widehat{e}_{j}$ and $\widehat{e}_{l}$ is zero for $j\neq l$ and $1$ for $j=l$, making the vectors $\widehat{e}_{1}$,..., $\widehat{e}_{n}$ an orthonormal basis of $\mathbb{C}^{n}$. Finally, if the curve with parametrisation $p(t)$ lies in the plane and is axisymmetric, we will write
      \begin{align}\label{reflection identity}
        p(-t)=\begin{pmatrix}
        1 & 0 \\0 & -1
      \end{pmatrix}p(t).
      \end{align}
      Next we will formulate the following lemmas needed to prove Theorem~\ref{Main Theorem 1}, Corollary~\ref{Main Corollary 1}, Theorem~\ref{Main Theorem 2}, Corollary~\ref{Main Corollary 2}, Theorem~\ref{Main Theorem 3} and Corollary~\ref{Main Corollary 3}:
      \begin{lemma}\label{Lemma 1}
        Let $q_{1}$,...,$q_{n}$ be an equally spaced choreography of (\ref{Equations of motion}). Then
        \begin{align}\label{Linear combination f}
          0=\sum\limits_{j=1}^{n-1}\left(m_{k+j}-\frac{M}{n}\right)\Delta_{j}(t)f\left(\|\Delta_{j}(t)\|^{2}\right)
        \end{align}
        and
        \begin{align}\label{Linear combination c}
          0=\sum\limits_{j=1}^{n-1}\left(m_{k+j}-\frac{M}{n}\right)\Delta_{j}(t),\textrm{ }k\in\{1,...,n\}.
        \end{align}
      \end{lemma}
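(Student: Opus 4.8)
The plan is to substitute the choreography ansatz into (\ref{Equations of motion}) and exploit the cyclic structure of the index set. After the normalisation $h_{k+1}-h_{k}=1$, $\widehat{P}=n$, we have $q_{k}(t)=p(t+h_{k})$; since $h_{k+Kn}=h_{k}+Kn$ and $p$ has period $n$, this gives $q_{k+j}(t)-q_{k}(t)=\Delta_{j}(t+h_{k})$ for $j\in\{1,\dots,n-1\}$ (with the extensions $m_{k+Kn}=m_{k}$ and $q_{k+Kn}=q_{k}$). Reindexing the sum in the $k$-th equation of (\ref{Equations of motion}) by $j\mapsto k+j$ modulo $n$ and setting $s=t+h_{k}$, it becomes
\[
\ddot{p}(s)=\sum_{j=1}^{n-1}m_{k+j}\,\Delta_{j}(s)\,f\!\left(\|\Delta_{j}(s)\|^{2}\right),\qquad k\in\{1,\dots,n\}.
\]
The key observation is that the left-hand side is independent of $k$: the fixed vector $\ddot p(s)$ is written as a linear combination of the fixed vectors $\Delta_{j}(s)f(\|\Delta_{j}(s)\|^{2})$, $j=1,\dots,n-1$, with $k$-dependent coefficients $m_{k+j}$.

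To obtain (\ref{Linear combination f}) I would average this identity over $k=1,\dots,n$. Since $\sum_{k=1}^{n}m_{k+j}=M$ for each fixed $j$, the average reads $\ddot{p}(s)=\frac{M}{n}\sum_{j=1}^{n-1}\Delta_{j}(s)f(\|\Delta_{j}(s)\|^{2})$, and subtracting it from the $k$-th identity gives exactly (\ref{Linear combination f}).

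For (\ref{Linear combination c}) I would first extract from (\ref{Center of mass zero}) the auxiliary identity $\sum_{k=1}^{n}p(t+k)=0$. Indeed (\ref{Center of mass zero}) says $\sum_{k=1}^{n}m_{k}p(t+k)=0$ for all $t$; replacing $t$ by $t+r$ and reindexing gives $\sum_{k=1}^{n}m_{k-r}p(t+k)=0$ for every $r\in\mathbb{Z}$, and summing these over $r=0,\dots,n-1$, together with $\sum_{r=0}^{n-1}m_{k-r}=M>0$, forces $\sum_{k=1}^{n}p(t+k)=0$. Now expand $\Delta_{j}(t)=p(t+j)-p(t)$ in $\sum_{j=1}^{n-1}\bigl(m_{k+j}-\tfrac{M}{n}\bigr)\Delta_{j}(t)$; the coefficient multiplying $-p(t)$ is $\sum_{j=1}^{n-1}(m_{k+j}-\tfrac{M}{n})=-(m_{k}-\tfrac{M}{n})$, which is exactly what is needed so that, after adjoining the $j=0$ term, the sum collapses to $\sum_{j=0}^{n-1}\bigl(m_{k+j}-\tfrac{M}{n}\bigr)p(t+j)=\sum_{j=0}^{n-1}m_{k+j}p(t+j)-\tfrac{M}{n}\sum_{j=0}^{n-1}p(t+j)$. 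The first term equals $\sum_{l=1}^{n}m_{l}q_{l}(t-h_{k})=0$ by (\ref{Center of mass zero}) (reindex $l=k+j$ and shift time), and the second vanishes by the auxiliary identity; hence (\ref{Linear combination c}).

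Everything above is elementary; the only real effort is careful bookkeeping with the cyclic index shifts and with the periodic extensions of $m_{k}$, $q_{k}$, $h_{k}$ and $p$. The one step that is not purely mechanical is noticing that (\ref{Center of mass zero}), applied at all the shifted times $t+r$ and summed, forces the \emph{unweighted} sum $\sum_{k=1}^{n}p(t+k)$ to be zero; without this the $\tfrac{M}{n}$ contribution in (\ref{Linear combination c}) would not cancel.
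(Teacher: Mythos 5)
Your proof of (\ref{Linear combination f}) is exactly the paper's: substitute the choreography ansatz, shift time to obtain $\ddot{p}(t)=\sum_{j=1}^{n-1}m_{k+j}\Delta_{j}(t)f\left(\|\Delta_{j}(t)\|^{2}\right)$, average over $k$ using $\sum_{k=1}^{n}m_{k+j}=M$, and subtract. For (\ref{Linear combination c}) your argument is also correct but organized differently. The paper reuses the same subtract-the-average mechanism: it rewrites (\ref{Center of mass zero}) as $-Mp(t)=\sum_{j=1}^{n-1}m_{j+k}\Delta_{j}(t)$ (its (\ref{Sum 2c})), averages over $k$ to get (\ref{Sum 3c}), and subtracts. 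You instead first extract the unweighted identity $\sum_{k=1}^{n}p(t+k)=0$ (itself obtained by averaging the centre-of-mass condition over cyclic shifts, and equivalent to the paper's (\ref{Sum 3c})) and then verify by direct regrouping that the claimed linear combination collapses to $\sum_{j=0}^{n-1}m_{k+j}p(t+j)-\frac{M}{n}\sum_{j=0}^{n-1}p(t+j)=0$, each piece vanishing by (\ref{Center of mass zero}) and the auxiliary identity respectively. The two routes use the same two ingredients (centre of mass plus cyclic averaging) and differ only in bookkeeping: yours makes the cancellation of the $\frac{M}{n}$ contribution more explicit, while the paper's keeps the proofs of the two halves of the lemma formally parallel. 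Both are complete and correct.
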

      \begin{proof}
        Substituting $q_{k}(t)$ with $p(t+k)$ and $q_{j}(t)$ with $p(t+j)$ in (\ref{Equations of motion}) and subsequently replacing $t+k$ with $t$ gives
        \begin{align*}
          \ddot{p}(t)=\sum\limits_{\substack{j=1\\j\neq k}}^{n}m_{j}(p(t+j-k)-p(t))f\left(\|p(t+j-k)-p(t)\|^{2}\right),
        \end{align*}
        which can be rewritten as
        \begin{align}\label{Sum 1f}
          \ddot{p}(t)=\sum\limits_{j=1}^{n-1}m_{j+k}(p(t+j)-p(t))f\left(\|p(t+j)-p(t)\|^{2}\right)=\sum\limits_{j=1}^{n-1}m_{j+k}\Delta_{j}(t)f\left(\|\Delta_{j}(t)\|^{2}\right)
        \end{align}
        for any fixed value $k$. Summing both sides of (\ref{Sum 1f}) from $1$ to $n$ with respect to $k$ then gives
        \begin{align}\label{Sum 2f}
          n\ddot{p}(t)&=\sum\limits_{j=1}^{n-1}\left(\sum\limits_{k=1}^{n}m_{j+k}\right)\Delta_{j}(t)f\left(\|\Delta_{j}(t)\|^{2}\right)=\sum\limits_{j=1}^{n-1}M\Delta_{j}(t)f\left(\|\Delta_{j}(t)\|^{2}\right).
        \end{align}
        Dividing both sides of (\ref{Sum 2f}) by $n$ and subtracting the resulting equation from (\ref{Sum 1f}) then gives (\ref{Linear combination f}).
        Next, we will prove (\ref{Linear combination c}) by using that $\sum\limits_{k=1}^{n}m_{k}q_{k}=0$: Note that
        \begin{align*}
          Mq_{k}=\sum\limits_{j=1}^{n}m_{j}q_{k}-0=\sum\limits_{j=1}^{n}m_{j}q_{k}-\sum\limits_{j=1}^{n}m_{j}q_{j}=\sum\limits_{\substack{j=1\\j\neq k}}^{n}m_{j}(q_{k}-q_{j}),
        \end{align*}
        which, writing $q_{k}(t)=p(t+k)$, $q_{j}(t)=p(t+j)$ and replacing $t+k$ with $t$, can be rewritten as
        \begin{align*}
          -Mp(t)=\sum\limits_{\substack{j=1\\j\neq k}}^{n}m_{j}(p(t+j-k)-p(t)),
        \end{align*}
        which, for any fixed $k$, can be rewritten as
        \begin{align}\label{Sum 2c}
          -Mp(t)=\sum\limits_{j=1}^{n-1}m_{j+k}(p(t+j)-p(t))=\sum\limits_{j=1}^{n-1}m_{j+k}\Delta_{j}(t).
        \end{align}
        Summing both sides of (\ref{Sum 2c}) with respect to $k$ from $1$ to $n$ and dividing the resulting equation on both sides by $n$ then gives
        \begin{align}\label{Sum 3c}
          -Mp(t)=\sum\limits_{j=1}^{n-1}\frac{M}{n}\Delta_{j}(t).
        \end{align}
        Subtracting (\ref{Sum 3c}) from (\ref{Sum 2c}) then finally gives (\ref{Linear combination c}). This completes the proof.
      \end{proof}
      \begin{remark}
        Lemma~\ref{Lemma 1} was proven by Chenciner in \cite{C} for $f(x)=x^{-\frac{3}{2}}$ and his proof works for general $f$ as well. As technically speaking the proof in \cite{C} was written down only for $f(x)=x^{-\frac{3}{2}}$ and to make the paper self-contained, we have included the proof here.
      \end{remark}
  \begin{lemma}\label{Lemma 2}
    Consider any choreography of (\ref{Equations of motion}) for which the curve given by $p(t)$ has an axis of symmetry. Then for all $l$, $k\in\{1,...,n\}$, $l\neq k$ we have that
    \begin{align}\label{Equations of motion choreography result 1}
      &(m_{k}-m_{l})\bigg(p(t+h_{l}-h_{k})-p(t)\bigg)f\left(\|p(t+h_{l}-h_{k})-p(t)\|^{2}\right)\nonumber\\
      &=\sum\limits_{\substack{j=1\\j\neq k,l}}^{n}m_{j}\Bigg(\bigg(p(t+h_{j}-h_{k})-p(t)\bigg)f\left(\|p(t+h_{j}-h_{k})-p(t)\|^{2}\right)\nonumber\\
      &-\bigg(p(t-(h_{j}-h_{l}))-p(t)\bigg)f\left(\|p(t-(h_{j}-h_{l}))-p(t)\|^{2}\right)\Bigg),
    \end{align}
    we have
    \begin{align}\label{Equations of motion choreography result 2}
      &(m_{k}-m_{l})\bigg(p(t+(h_{l}-h_{k}))-p(t)\bigg)f\left(\|p(t+(h_{l}-h_{k}))-p(t)\|^{2}\right)\nonumber\\
      &=\sum\limits_{\substack{j=1\\j\neq k,l}}^{n}m_{j}\Bigg(\bigg(p(t+(h_{j}-h_{k}))-p(t+(h_{l}-h_{k}))\bigg)f\left(\|p(t+(h_{j}-h_{k}))-p(t+(h_{l}-h_{k}))\|^{2}\right)\nonumber\\
      &-\bigg(p(t+(h_{l}-h_{j}))-p(t+(h_{l}-h_{k}))\bigg)f\left(\|p(t+(h_{l}-h_{j}))-p(t+(h_{l}-h_{k}))\|^{2}\right)\Bigg),
    \end{align}
    we have
    \begin{align}\label{Equations of motion choreography result 3}
      &(m_{k}-m_{l})\bigg(p(t+h_{l}-h_{k})-p(t)\bigg)\nonumber\\
      &=\sum\limits_{\substack{j=1\\j\neq k,l}}^{n}m_{j}\Bigg(\bigg(p(t+h_{j}-h_{k})-p(t)\bigg)-\bigg(p(t-(h_{j}-h_{l}))-p(t)\bigg)\Bigg),
    \end{align}
    and we have
    \begin{align}\label{Equations of motion choreography result 4}
      &(m_{k}-m_{l})\bigg(p(t+(h_{l}-h_{k}))-p(t)\bigg)\nonumber\\
      &=\sum\limits_{\substack{j=1\\j\neq k,l}}^{n}m_{j}\Bigg(\bigg(p(t+(h_{j}-h_{k}))-p(t+(h_{l}-h_{k}))\bigg)-\bigg(p(t+(h_{l}-h_{j}))-p(t+(h_{l}-h_{k}))\bigg)\Bigg).
    \end{align}
  \end{lemma}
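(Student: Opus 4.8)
The plan is to derive all four identities by the same elementary manipulation of the equations of motion (\ref{Equations of motion}), exploiting only (a) the choreography ansatz $q_k(t)=p(t+h_k)$ and (b) the reflection symmetry (\ref{reflection identity}), and then observing that equations (\ref{Equations of motion choreography result 3}) and (\ref{Equations of motion choreography result 4}) follow from (\ref{Equations of motion choreography result 1}) and (\ref{Equations of motion choreography result 2}) by the same ``$f\equiv 1$'' trick used in Lemma~\ref{Lemma 1} (where (\ref{Linear combination c}) came from (\ref{Linear combination f}) via the center-of-mass relation (\ref{Center of mass zero})). So the real content is the two $f$-weighted identities.

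For (\ref{Equations of motion choreography result 1}): write (\ref{Equations of motion}) for index $k$ and for index $l$, substitute $q_i(t)=p(t+h_i)$, and in the $k$-equation replace $t+h_k$ by $t$ while in the $l$-equation replace $t+h_l$ by $t$. This gives two expressions for $\ddot p(t)$,
\begin{align*}
  \ddot p(t)&=\sum_{i\neq k} m_i\bigl(p(t+h_i-h_k)-p(t)\bigr)f\bigl(\|p(t+h_i-h_k)-p(t)\|^2\bigr),\\
  \ddot p(t)&=\sum_{i\neq l} m_i\bigl(p(t+h_i-h_l)-p(t)\bigr)f\bigl(\|p(t+h_i-h_l)-p(t)\|^2\bigr).
\end{align*}
Now I would use the axis of symmetry: by (\ref{reflection identity}), $\ddot p(-t)=R\,\ddot p(t)$ with $R=\diag(1,-1)$, and $R$ is orthogonal so it preserves all the norms $\|\cdot\|$. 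Applying $R$ to the second identity evaluated at $-t$ and using $R\,p(-s)=p(s)$ turns $p(-t+h_i-h_l)$ into $p(t-(h_i-h_l))$, converting the $l$-equation into a relation between $\ddot p(t)$ and terms $p(t-(h_i-h_l))-p(t)$. Subtracting this reflected $l$-equation from the $k$-equation, the $\ddot p(t)$ terms cancel; the $j=l$ term of the first sum and the $j=k$ term of the reflected second sum combine (after another use of symmetry to match $p(t+h_l-h_k)-p(t)$ with the reflection of $p(t-(h_k-h_l))-p(t)$) into the single left-hand term $(m_k-m_l)\bigl(p(t+h_l-h_k)-p(t)\bigr)f(\cdots)$, and what remains on the right is exactly the stated sum over $j\neq k,l$. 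Identity (\ref{Equations of motion choreography result 2}) is the ``centered at $q_l$'' version: instead of shifting so that $t+h_k\mapsto t$ in both, I keep the $k$-equation shifted by $h_k$ but rewrite everything in the argument $t+(h_l-h_k)$, i.e.\ I translate the whole first display by $h_l-h_k$, and then play the same reflection game; the bookkeeping differs only in which point the differences are based at.

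Then (\ref{Equations of motion choreography result 3}) and (\ref{Equations of motion choreography result 4}) are obtained by replacing each occurrence of the scalar $f(\|\cdot\|^2)$ by $1$ in the derivation: concretely, from $\sum_{i\neq k}m_i(q_i-q_k)=\sum_{i\neq k}m_i q_i - Mq_k = -Mq_k$ (using (\ref{Center of mass zero})) one gets the ``$f\equiv1$'' analogue of the two displays above, namely $-Mp(t)=\sum_{i\neq k}m_i\bigl(p(t+h_i-h_k)-p(t)\bigr)$ and similarly for $l$; subtracting the reflected $l$-version from the $k$-version kills $-Mp(t)$ (note $R(-Mp(-t))=-Mp(t)$, so it cancels just like $\ddot p(t)$ did), and the same recombination of the $j=l$ and $j=k$ terms yields (\ref{Equations of motion choreography result 3}); (\ref{Equations of motion choreography result 4}) is its $q_l$-based counterpart.

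The main obstacle I anticipate is purely notational: correctly tracking how the reflection $R$ and the argument shifts interact so that the $j=l$ term of one sum and the $j=k$ term of the other genuinely collapse to the single left-hand side with the exact sign $(m_k-m_l)$, and making sure the remaining indices run over $j\neq k,l$ with matched arguments inside each $f(\|\cdot\|^2)$. In particular one must check that symmetry is used in the right place to identify $\|p(t+h_l-h_k)-p(t)\|$ with the norm appearing when the $j=k$ term of the reflected $l$-sum is re-expressed — this is the only spot where (\ref{reflection identity}) is essential rather than cosmetic, and getting the reflected arguments $p(t-(h_j-h_l))$ versus $p(t+(h_l-h_j))$ to line up between (\ref{Equations of motion choreography result 1}) and (\ref{Equations of motion choreography result 2}) requires care. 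No estimates or analysis are needed; it is all exact algebra with the symmetry constraint.
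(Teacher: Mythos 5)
Your proposal is correct and follows essentially the same route as the paper's proof: shift the equations of motion to two expressions for $\ddot{p}(t)$, reflect one of them via (\ref{reflection identity}) (using that $\diag(1,-1)$ is orthogonal, so the norms inside $f$ are preserved), subtract so that the $j=l$ and $j=k$ terms collapse into the single $(m_{k}-m_{l})$ term, obtain (\ref{Equations of motion choreography result 2}) by a further shift-and-reflect of the resulting identity, and derive (\ref{Equations of motion choreography result 3}) and (\ref{Equations of motion choreography result 4}) by running the same argument on the centre-of-mass relation, i.e.\ with $f$ replaced by $1$ and $\ddot{p}(t)$ by $-Mp(t)$. The one worry you flag is actually vacuous: no extra use of symmetry is needed to merge the $j=l$ and $j=k$ terms, since $t-(h_{k}-h_{l})=t+(h_{l}-h_{k})$ identically.
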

  \begin{proof}
  Substituting the $q_{k}$ and $q_{j}$ with $p(t+h_{k})$ and $p(t+h_{j})$ respectively in (\ref{Equations of motion}) we get that
  \begin{align*}
    \ddot{p}(t+h_{k})=\sum\limits_{\substack{j=1\\j\neq k}}^{n}m_{j}\bigg(p(t+h_{j})-p(t+h_{k})\bigg)f\left(\|p(t+h_{j})-p(t+h_{k})\|^{2}\right),
  \end{align*}
  which holds for all $t\in\mathbb{R}$. As such we may replace $t+h_{k}$ with $t$ to obtain
  \begin{align}\label{Equations of motion choreography 1}
    \ddot{p}(t)=\sum\limits_{\substack{j=1\\j\neq k}}^{n}m_{j}\bigg(p(t+h_{j}-h_{k})-p(t)\bigg)f\left(\|p(t+h_{j}-h_{k})-p(t)\|^{2}\right).
  \end{align}
  Let $s=-t$. Substituting $t$ with $-s$ in (\ref{Equations of motion choreography 1}) and using (\ref{reflection identity}) we get
  \begin{align}\label{Equations of motion choreography 2}
    &\begin{pmatrix}
      1 & 0 \\
      0 & -1
    \end{pmatrix}(-1)^{2}\frac{d^{2}}{ds^{2}}{p}(s)\nonumber\\
    &=\begin{pmatrix}
      1 & 0 \\
      0 & -1
    \end{pmatrix}\sum\limits_{\substack{j=1\\j\neq k}}^{n}m_{j}\bigg(p(s-h_{j}+h_{k})-p(s)\bigg)f\left(\|p(s-h_{j}+h_{k})-p(s)\|^{2}\right)
  \end{align}
  and multiplying both sides of (\ref{Equations of motion choreography 2}) from the left with $\begin{pmatrix}
      1 & 0 \\
      0 & -1
    \end{pmatrix}^{-1}$ gives
  \begin{align}\label{Equations of motion choreography 3}
    \ddot{p}(s)=\sum\limits_{\substack{j=1\\j\neq k}}^{n}m_{j}\bigg(p(s-h_{j}+h_{k})-p(s)\bigg)f\left(\|p(s-h_{j}+h_{k})-p(s)\|^{2}\right).
  \end{align}
  Replacing $s$ with $t$ and $k$ with $l$, $l\in\{1,...,n\}$ in (\ref{Equations of motion choreography 3}) and subtracting the resulting equation from (\ref{Equations of motion choreography 1}) gives
  \begin{align}\label{Equations of motion choreography 4}
    &0=(m_{l}-m_{k})\bigg(p(t+h_{l}-h_{k})-p(t)\bigg)f\left(\|p(t+h_{l}-h_{k})-p(t)\|^{2}\right)\nonumber\\
    &+\sum\limits_{\substack{j=1\\j\neq k,l}}^{n}m_{j}\Bigg(\bigg(p(t+h_{j}-h_{k})-p(t)\bigg)f\left(\|p(t+h_{j}-h_{k})-p(t)\|^{2}\right)\nonumber\\
    &-\bigg(p(t-(h_{j}-h_{l}))-p(t)\bigg)f\left(\|p(t-(h_{j}-h_{l}))-p(t)\|^{2}\right)\Bigg),
  \end{align}
  which proves (\ref{Equations of motion choreography result 1}).
  Replacing $t$ with $-s-(h_{l}-h_{k})$ in (\ref{Equations of motion choreography 4}), multiplying both sides of the resulting equation from the left with $-\begin{pmatrix}
    1 & 0 \\0 & -1
  \end{pmatrix}^{-1}$ and using (\ref{reflection identity}) gives
  \begin{align}\label{Equations of motion choreography 5}
    &0=(m_{l}-m_{k})\bigg(p(s+(h_{l}-h_{k}))-p(s)\bigg)f\left(\|p(s+(h_{l}-h_{k}))-p(s)\|^{2}\right)\nonumber\\
    &+\sum\limits_{\substack{j=1\\j\neq k,l}}^{n}m_{j}\Bigg(\bigg(p(s+(h_{j}-h_{k}))-p(s+(h_{l}-h_{k}))\bigg)f\left(\|p(s+(h_{j}-h_{k}))-p(s+(h_{l}-h_{k}))\|^{2}\right)\nonumber\\
    &-\bigg(p(s+(h_{l}-h_{j}))-p(s+(h_{l}-h_{k}))\bigg)f\left(\|p(s+(h_{l}-h_{j}))-p(s+(h_{l}-h_{k}))\|^{2}\right)\Bigg).
  \end{align}
  Replacing $s$ with $t$ in (\ref{Equations of motion choreography 5}) then proves (\ref{Equations of motion choreography result 2}). \\
  Note that by (\ref{Center of mass zero}) we have that
  \begin{align}\label{Center of mass zero 1}
    -Mq_{k}=\sum\limits_{j=1}^{n}m_{j}(-q_{k})+0=-\sum\limits_{j=1}^{n}m_{j}q_{k}+\sum\limits_{j=1}^{n}m_{j}q_{j}=\sum\limits_{j=1}^{n}m_{j}(q_{j}-q_{k})=\sum\limits_{\substack{j=1\\j\neq k}}^{n}m_{j}(q_{j}-q_{k})
  \end{align}
  and if we replace $q_{k}$ with $p(t+h_{k})$ for all $k\in\{1,...,n\}$ in (\ref{Center of mass zero 1}) and replace $t+h_{k}$ with $t$ in the resulting equation, then we get
  \begin{align}\label{Equations of motion choreography 1 center of mass}
    -Mp(t)=\sum\limits_{\substack{j=1\\j\neq k}}^{n}m_{j}\bigg(p(t+h_{j}-h_{k})-p(t)\bigg).
  \end{align}
  Note that the right-hand side of (\ref{Equations of motion choreography 1 center of mass}) is exactly (\ref{Equations of motion choreography 1}) with $f$ replaced by $1$ and $\ddot{p}(t)$ replaced by $-Mp(t)$. Repeating the steps that led from (\ref{Equations of motion choreography 1}) to (\ref{Equations of motion choreography 4}) and (\ref{Equations of motion choreography 5}) but starting with (\ref{Equations of motion choreography 1 center of mass}) instead then proves (\ref{Equations of motion choreography result 3}) and (\ref{Equations of motion choreography result 4}). This completes the proof.
  \end{proof}
  \begin{lemma}\label{Lemma 3}
    Let $q_{1}$,..., $q_{n}$ be an equally spaced choreographic solution of (\ref{EquationsOfMotion Curved}). Then
    \begin{align}\label{CurvedHelp}
       0=\sum\limits_{j=1}^{n-1}\frac{\left(m_{j+k}-\frac{M}{n}\right)\Bigg(p(t+j)-\sigma\bigg(p(t+j)\odot p(t)\bigg)p(t)\Bigg)}{\Bigg(\sigma -\sigma\bigg(p(t+j)\odot p(t)\bigg)^{2}\Bigg)^{\frac{3}{2}}}
    \end{align}
  for all $k\in\{1,...,n\}$.
  \end{lemma}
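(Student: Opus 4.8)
The plan is to run exactly the averaging argument used for Lemma~\ref{Lemma 1}, while observing that the velocity-dependent self-interaction term in (\ref{EquationsOfMotion Curved}), which has no analogue in (\ref{Equations of motion}), becomes independent of $k$ once everything is written in a common time variable and therefore drops out. First I would substitute $q_{k}(t)=p(t+k)$ and $q_{j}(t)=p(t+j)$ into (\ref{EquationsOfMotion Curved}); since the identity so obtained holds for all $t\in\mathbb{R}$, I may replace $t+k$ by $t$. Re-indexing the sum by $j\mapsto j+k$, using that $p$ has period $n$ together with $m_{k+Kn}=m_{k}$, and that $x\odot y=y\odot x$, this gives, for each fixed $k\in\{1,\ldots,n\}$,
\begin{align*}
  \ddot{p}(t)=\sum\limits_{j=1}^{n-1}\frac{m_{j+k}\Big(p(t+j)-\sigma\big(p(t+j)\odot p(t)\big)p(t)\Big)}{\Big(\sigma-\sigma\big(p(t+j)\odot p(t)\big)^{2}\Big)^{\frac{3}{2}}}-\sigma\big(\dot{p}(t)\odot\dot{p}(t)\big)p(t),
\end{align*}
which is the curved counterpart of (\ref{Sum 1f}).

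Next I would sum both sides over $k$ from $1$ to $n$. For each fixed $j$ the coefficient becomes $\sum_{k=1}^{n}m_{j+k}=M$, since this is just the sum of all $n$ masses, while $\ddot{p}(t)$ and the velocity term are each merely multiplied by $n$; dividing by $n$ then produces the same equation with $m_{j+k}$ replaced by $\frac{M}{n}$. Subtracting this averaged equation from the fixed-$k$ equation, the left-hand side $\ddot{p}(t)$ and the term $-\sigma(\dot{p}(t)\odot\dot{p}(t))p(t)$ cancel because neither depends on $k$, and what survives is precisely (\ref{CurvedHelp}).

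The computation is essentially bookkeeping parallel to Lemma~\ref{Lemma 1}; the only point that needs care, and the only place the curved geometry intervenes, is checking that after the shift $t+k\mapsto t$ the self-interaction term $\sigma(\dot q_{k}\odot\dot q_{k})q_{k}$ turns into $\sigma(\dot p(t)\odot\dot p(t))p(t)$, the same for every $k$, so that it cancels under the subtraction. One should also note that the re-indexing quietly uses $n$-periodicity of $p$, so that $p(t+j-k)$ may be replaced by $p(t+((j-k)\bmod n))$ with the index of the corresponding mass shifted to match. For this reason Lemma~\ref{Lemma 3} yields only the analogue of (\ref{Linear combination f}): there is no analogue of (\ref{Linear combination c}), since that identity relied on the centre-of-mass condition (\ref{Center of mass zero}), which is a feature of the flat problem rather than of (\ref{EquationsOfMotion Curved}).
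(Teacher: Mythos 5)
Your proposal is correct and follows essentially the same route as the paper: substitute $q_{k}(t)=p(t+k)$, shift the time variable, re-index the sum to run over $j\in\{1,\ldots,n-1\}$ with masses $m_{j+k}$, average over $k$, and subtract, with the $k$-independent terms $\ddot{p}(t)$ and $-\sigma(\dot{p}(t)\odot\dot{p}(t))p(t)$ cancelling exactly as you describe. Your closing observations about the role of periodicity and the absence of a centre-of-mass analogue are accurate and consistent with the statement of the lemma.
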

  \begin{proof}
    Let $q_{1}$,..., $q_{n}$ be an equally spaced choreographic solution of (\ref{EquationsOfMotion Curved}). Then by (\ref{EquationsOfMotion Curved}) we have that
    \begin{align}\label{Step1 Curved}
   \ddot{p}(t+k)&=\sum\limits_{j=1,\textrm{ }j\neq k}^{n}\frac{m_{j}\Bigg(p(t+j)-\sigma\bigg(p(t+j)\odot p(t+k)\bigg)p(t+k)\Bigg)}{\Bigg(\sigma -\sigma\bigg(p(t+j)\odot p(t+k)\bigg)^{2}\Bigg)^{\frac{3}{2}}}\nonumber\\&-\sigma\bigg(\dot{p}(t+k)\odot\dot{p}(t+k)\bigg)p(t+k),\textrm{ }k\in\{1,...,\textrm{ }n\}.
  \end{align}
  Let $s=t+k$. Then we can rewrite (\ref{Step1 Curved}) as
  \begin{align}\label{Step2 Curved}
   \ddot{p}(s)&=\sum\limits_{j=1,\textrm{ }j\neq k}^{n}\frac{m_{j}\Bigg(p(s+j-k)-\sigma\bigg(p(s+j-k)\odot p(s)\bigg)p(s)\Bigg)}{\Bigg(\sigma -\sigma\bigg(p(s+j-k)\odot p(s)\bigg)^{2}\Bigg)^{\frac{3}{2}}}-\sigma\bigg(\dot{p}(s)\odot\dot{p}(s)\bigg)p(s)\nonumber\\
   &=\sum\limits_{j=1}^{n-1}\frac{m_{j+k}\Bigg(p(s+j)-\sigma\bigg(p(s+j)\odot p(s)\bigg)p(s)\Bigg)}{\Bigg(\sigma -\sigma\bigg(p(s+j)\odot p(s)\bigg)^{2}\Bigg)^{\frac{3}{2}}}-\sigma\bigg(\dot{p}(s)\odot\dot{p}(s)\bigg)p(s).
  \end{align}
  Summing both sides of (\ref{Step2 Curved}) from $1$ to $n$ with respect to $k$ and subsequently dividing both sides by $n$ gives
  \begin{align}\label{Step3 Curved}
   \ddot{p}(s)&=\sum\limits_{j=1}^{n-1}\frac{\frac{M}{n}\Bigg(p(s+j)-\sigma\bigg(p(s+j)\odot p(s)\bigg)p(s)\Bigg)}{\Bigg(\sigma -\sigma\bigg(p(s+j)\odot p(s)\bigg)^{2}\Bigg)^{\frac{3}{2}}}-\sigma\bigg(\dot{p}(s)\odot\dot{p}(s)\bigg)p(s).
  \end{align}
  Subtracting (\ref{Step3 Curved}) from (\ref{Step2 Curved}) and replacing $s$ with $t$ then finally gives
  \begin{align*}
       0=\sum\limits_{j=1}^{n-1}\frac{\left(m_{j+k}-\frac{M}{n}\right)\Bigg(p(t+j)-\sigma\bigg(p(t+j)\odot p(t)\bigg)p(t)\Bigg)}{\Bigg(\sigma -\sigma\bigg(p(t+j)\odot p(t)\bigg)^{2}\Bigg)^{\frac{3}{2}}}
    \end{align*}
  for all $k\in\{1,...,n\}$. This completes the proof.
  \end{proof}
  \section{Proof of Theorem~\ref{Main Theorem 1}}\label{Section proof of main theorem 1}
    By Lemma~\ref{Lemma 1}, we have that
    \begin{align}\label{number 1}
          0=\sum\limits_{j=1}^{n-1}\left(m_{k+j}-\frac{M}{n}\right)\Delta_{j}(t)f\left(\|\Delta_{j}(t)\|^{2}\right)
    \end{align}
    and
    \begin{align}\label{number 2}
          0=\sum\limits_{j=1}^{n-1}\left(m_{k+j}-\frac{M}{n}\right)\Delta_{j}(t),\textrm{ }k\in\mathbb{Z}.
    \end{align}
    If $d=n-1$, then there exists a subset $S$ of $[0,n]$ of Lebesgue measure $n$ for which the vectors $\Delta_{1}(t),...,\Delta_{n-1}(t)$ are linearly independent for all $t\in S$, which means that by both (\ref{number 1}) and (\ref{number 2}) we have that $m_{j+k}=\frac{M}{n}$ for all $k$, $j\in\mathbb{Z}$, which proves that if $d=n-1$ all masses are equal. If $d=n-2$, there exists an $S\subset[0,n]$ of Lebesgue measure $n$, such that for all $t\in S$ we have that the vectors $\Delta_{1}(t),...,\Delta_{n-1}(t)$ span an $n-2$-dimensional space. For any $t\in S$ there then exists an $r\in\{1,...,n-1\}$ (which may depend on $t$), such that by (\ref{number 1}) and (\ref{number 2}) we have that
    \begin{align}\label{number 3}
          -\left(m_{k+r}-\frac{M}{n}\right)\Delta_{r}(t)f\left(\|\Delta_{r}(t)\|^{2}\right)=\sum\limits_{\substack{j=1\\j\neq r}}^{n-1}\left(m_{k+j}-\frac{M}{n}\right)\Delta_{j}(t)f\left(\|\Delta_{j}(t)\|^{2}\right)
    \end{align}
    and
    \begin{align}\label{number 4}
          -\left(m_{k+r}-\frac{M}{n}\right)\Delta_{r}(t)=\sum\limits_{\substack{j=1\\j\neq r}}^{n-1}\left(m_{k+j}-\frac{M}{n}\right)\Delta_{j}(t),
    \end{align}
    where the vectors $\Delta_{j}(t)$, $j\in\{1,...,n-1\}$, $j\neq r$, span an $n-2$-dimensional space. The $\Delta_{j}(t)$ are linearly independent for $j\in\{1,...,n-1\}$, $j\neq r$, so for those $j$ we have that if we multiply both sides of (\ref{number 4}) with $f\left(\|\Delta_{r}(t)\|^{2}\right)$ and compare the coefficients of the $\Delta_{j}(t)$ of the resulting identity with the coefficients of the $\Delta_{j}(t)$ in (\ref{number 3}), we find that  \begin{align*}\left(m_{k+j}-\frac{M}{n}\right)f\left(\|\Delta_{r}(t)\|^{2}\right)=\left(m_{k+j}-\frac{M}{n}\right)f\left(\|\Delta_{j}(t)\|^{2}\right)\end{align*} for all $k\in\mathbb{Z}$, $j\in\{1,...,n-1\}$. So if there is at least one $t\in S$ for which there are an $r$ and a $j$ for which  $f\left(\|\Delta_{j}(t)\|^{2}\right)\neq f\left(\|\Delta_{r}(t)\|^{2}\right)$, then all masses are equal. The alternative is that $f\left(\|\Delta_{j_{1}}(t)\|^{2}\right)=f\left(\|\Delta_{j_{2}}(t)\|^{2}\right)$ for all $j_{1}$, $j_{2}\in\{1,...,n-1\}$ and all $t\in S$. But if that is the case, then because $f$ is a strictly decreasing and therefore bijective function, we have that $\|\Delta_{j_{1}}(t)\|=\|\Delta_{j_{2}}(t)\|$ for all $j_{1}$, $j_{2}\in\{1,...,n-1\}$, for all $t\in[0,n]$, which means that, by using a suitable change of variables, we have that $\|q_{j_{1}}(t)-q_{j_{2}}(t)\|=\|q_{l_{1}}(t)-q_{l_{2}}(t)\|$ for all $j_{1},j_{2},l_{1},l_{2}\in\{1,...,n\}$, $j_{1}\neq j_{2}$ and $l_{1}\neq l_{2}$, for all $t\in [0,n]$, which means that the $q_{1}$,...,$q_{n}$ represent the vertices of an $n-1$-dimensional simplex, which contradicts that $d=n-2$. So all masses are equal.

    If $d=n-r$, $r\geq 3$, we have that there exists a set $S\subset[0,n]$ with Lebesgue measure $n$, for which the vectors $\Delta_{j}(t)$, $j\in\{1,...,n-1\}$ span an $n-r$-dimensional space for all $t\in S$. Additionally, note that because $\widehat{e}_{1}$,..., $\widehat{e}_{n}$ span $\mathbb{C}^{n}$, there exist constants $a_{1},...,a_{n-1}$ such that
    \begin{align}\label{Span}
      \begin{pmatrix}
        m_{1+k}-\frac{M}{n}\\
        m_{2+k}-\frac{M}{n}\\
        \vdots\\
        m_{n+k}-\frac{M}{n}
      \end{pmatrix}=\sum\limits_{l=1}^{n-1}a_{l}\lambda_{l}^{k}\widehat{e}_{l}.
    \end{align}
    Note that by construction $\widehat{e}_{n}$ is orthogonal to $\left(m_{1+k}-\frac{M}{n},m_{2+k}-\frac{M}{n},...,m_{n+k}-\frac{M}{n}\right)^{T}$, so we may exclude $\widehat{e}_{n}$ in the linear combination in (\ref{Span}).
    Again by Lemma~\ref{Lemma 1}, this means that
    \begin{align*}
          0=\sum\limits_{j=1}^{n-1}\left(m_{k+j}-\frac{M}{n}\right)\Delta_{j}(t)f\left(\|\Delta_{j}(t)\|^{2}\right)=\frac{1}{\sqrt{n}}\sum\limits_{j=1}^{n-1}\left(\sum\limits_{l=1}^{n-1}a_{l}\lambda_{l}^{k+j-1}\right)\Delta_{j}(t)f\left(\|\Delta_{j}(t)\|^{2}\right)
    \end{align*}
    and
    \begin{align*}
          0=\sum\limits_{j=1}^{n-1}\left(m_{k+j}-\frac{M}{n}\right)\Delta_{j}(t)=\frac{1}{\sqrt{n}}\sum\limits_{j=1}^{n-1}\left(\sum\limits_{l=1}^{n-1}a_{l}\lambda_{l}^{k+j-1}\right)\Delta_{j}(t),
    \end{align*}
    so
    \begin{align*}
          0=\sum\limits_{l=1}^{n-1}a_{l}\lambda_{l}^{k}\sum\limits_{j=1}^{n-1}\lambda_{l}^{j-1}\Delta_{j}(t)f\left(\|\Delta_{j}(t)\|^{2}\right)
    \textrm{ and }
          0=\sum\limits_{l=1}^{n-1}a_{l}\lambda_{l}^{k}\sum\limits_{j=1}^{n-1}\lambda_{l}^{j-1}\Delta_{j}(t),
    \end{align*}
    which means by the linear independence of the $\lambda_{l}^{k}$ as functions of $k$ that for all $l\in\{1,...,n-1\}$ we have that $a_{l}=0$, or
    \begin{align}
          0=\sum\limits_{j=1}^{n-1}\lambda_{l}^{j-1}\Delta_{j}(t)f\left(\|\Delta_{j}(t)\|^{2}\right)\textrm{ and }\label{DeltaForce1}\\
          0=\sum\limits_{j=1}^{n-1}\lambda_{l}^{j-1}\Delta_{j}(t).\label{DeltaForce2}
    \end{align}
    If there are any $l$ for which $a_{l}\neq 0$, $l\neq\frac{n}{2}\textrm{ }mod\textrm{ }n$ if $n$ is even, then taking complex conjugates on both sides of (\ref{DeltaForce1}) and (\ref{DeltaForce2}) gives
    \begin{align}
          0=\sum\limits_{j=1}^{n-1}\lambda_{\pm l}^{j-1}\Delta_{j}(t)f\left(\|\Delta_{j}(t)\|^{2}\right)\textrm{ and }\label{Lead up to general equations1}\\
          0=\sum\limits_{j=1}^{n-1}\lambda_{\pm l}^{j-1}\Delta_{j}(t),\label{Lead up to general equations2}
    \end{align}
    so for all $k\in\{1,...,n\}$ we have that
    \begin{align*}
          &0=\lambda_{l}^{k-1}\sum\limits_{j=1}^{n-1}\lambda_{-l}^{j-1}\Delta_{j}(t)f\left(\|\Delta_{j}(t)\|^{2}\right)-\lambda_{-l}^{k-1}\sum\limits_{j=1}^{n-1}\lambda_{l}^{j-1}\Delta_{j}(t)f\left(\|\Delta_{j}(t)\|^{2}\right)\textrm{ and }\\
          &0=\lambda_{l}^{k-1}\sum\limits_{j=1}^{n-1}\lambda_{-l}^{j-1}\Delta_{j}(t)-\lambda_{-l}^{k-1}\sum\limits_{j=1}^{n-1}\lambda_{l}^{j-1}\Delta_{j}(t),
    \end{align*}
    so as $\lambda_{l}^{k-1}\lambda_{-l}^{j-1}-\lambda_{-l}^{k-1}\lambda_{-l}^{k}\lambda_{l}^{j-1}=2i\sin{\frac{2\pi l}{n}(k-j)}$ that means that
    \begin{align*}
          0=\sum\limits_{\substack{j=1\\j\neq k}}^{n-1}\left(\sin{\frac{2\pi l}{n}(k-j)}\right)\Delta_{j}(t)f\left(\|\Delta_{j}(t)\|^{2}\right)\textrm{ and }\\
          0=\sum\limits_{\substack{j=1\\j\neq k}}^{n-1}\left(\sin{\frac{2\pi l}{n}(k-j)}\right)\Delta_{j}(t)
    \end{align*}
    for all $k\in\{1,...,n\}$. Particularly, if $n$ is even, this means that
    \begin{align}
          0=\sum\limits_{\substack{j=1\\j\neq k,k+\frac{n}{2}}}^{n-1}\left(\sin{\frac{2\pi l}{n}(k-j)}\right)\Delta_{j}(t)f\left(\|\Delta_{j}(t)\|^{2}\right)\textrm{ and }\label{General equations1}\\
          0=\sum\limits_{\substack{j=1\\j\neq k,k+\frac{n}{2}}}^{n-1}\left(\sin{\frac{2\pi l}{n}(k-j)}\right)\Delta_{j}(t)\label{General equations2}
    \end{align}
    for all $k\in\{1,...,n\}$. \\
    So if $d=n-3$, we have, reusing the argument we used for the $d=n-2$ case, that there are two ways we can write a vector as a linear combination of $n-3$ linearly independent vectors, unless $\|\Delta_{j}(t)\|$ is independent of $j$, in which case the point masses are the vertices of an $n$-simplex, which is a contradiction.   This means that if $d=n-1$, $d=n-2$, or if $n$ is odd and $d=n-3$, all masses are equal (as the $a_{l}$ are all zero) and if $n$ is even and $d=n-3$, all masses with an odd label are equal and all masses with an even label are equal (as we have not excluded the possibility that $a_{\frac{n}{2}}\neq 0$). This completes the proof.
  \section{Proof of Corollary~\ref{Main Corollary 1}}\label{Section proof of main corollary 1}
  If $q_{1}$,..., $q_{n}$ move along a curve like an equally spaced choreography and have masses $m_{1},...,m_{n}$ and $q_{n+1}=0$ and has mass $m$, then by (\ref{Equations of motion}) we have for $k\neq n+1$ that
  \begin{align*}
    \ddot{q}_{k}=\sum\limits_{\substack{j=1\\j\neq k}}^{n}m_{j}(q_{j}-q_{k})f\left(\|q_{j}-q_{k}\|^{2}\right)-m q_{k}f\left(\|q_{k}\|^{2}\right),
  \end{align*}
  which means that the same argument that gave (\ref{Sum 1f}) gives
  \begin{align}\label{the n+1 case 1}
     \ddot{p}(t)+m p(t)f\left(\|p(t)\|^{2}\right)&=\sum\limits_{j=1}^{n-1}m_{j+k}(p(t+j)-p(t))f\left(\|p(t+j)-p(t)\|^{2}\right)\nonumber\\
     &=\sum\limits_{j=1}^{n-1}m_{j+k}\Delta_{j}(t)f\left(\|\Delta_{j}(t)\|^{2}\right),
  \end{align}
  if we use our definition of $m_{j+k}$, $j$, $k\in\mathbb{Z}$ for the masses $m_{1}$,...,$m_{n}$.

  Summing both sides of (\ref{the n+1 case 1}) with respect to $k$ from $1$ to $n$ and dividing both sides of the resulting equation by $n$ then gives
  \begin{align}\label{the n+1 case 2}
     \ddot{p}(t)+m p(t)f\left(\|p(t)\|^{2}\right)=\sum\limits_{j=1}^{n-1}\frac{M}{n}\Delta_{j}(t)f\left(\|\Delta_{j}(t)\|^{2}\right)
  \end{align}
  and subtracting (\ref{the n+1 case 2}) from (\ref{the n+1 case 1}) then gives (\ref{Linear combination f}) again. As the formula for the center of mass is exactly the same as in the proof of Lemma~\ref{Lemma 1}, (\ref{Linear combination c}) holds true as well and as (\ref{Linear combination f}) and (\ref{Linear combination c}) generated the proof of Theorem~\ref{Main Theorem 1}, the proof of Theorem~\ref{Main Theorem 1} proves Corollary~\ref{Main Corollary 1} as well.
  \section{Proof of Theorem~\ref{Main Theorem 2}}\label{Section proof of main theorem 2}
  Consider a choreography solution of (\ref{Equations of motion}) for $n=3$ for which $p$ has an axis of symmetry. Then by Lemma~\ref{Lemma 2} we have that for any $i$, $j$, $k\in\{1,2,3\}$, $i\neq j$, $i\neq k$, $j\neq k$,
  \begin{align}\label{Boom1}
      &(m_{i}-m_{k})\bigg(p(t+h_{k}-h_{i})-p(t)\bigg)f\left(\|p(t+h_{k}-h_{i})-p(t)\|^{2}\right)\nonumber\\
      &=m_{j}\Bigg(\bigg(p(t+h_{j}-h_{i})-p(t)\bigg)f\left(\|p(t+h_{j}-h_{i})-p(t)\|^{2}\right)\nonumber\\
      &-\bigg(p(t-(h_{j}-h_{k}))-p(t)\bigg)f\left(\|p(t-(h_{j}-h_{k}))-p(t)\|^{2}\right)\Bigg),
    \end{align}
    \begin{align}\label{Boom2}
      &(m_{i}-m_{k})\bigg(p(t+(h_{k}-h_{i}))-p(t)\bigg)f\left(\|p(t+(h_{k}-h_{i}))-p(t)\|^{2}\right)\nonumber\\
      &=m_{j}\Bigg(\bigg(p(t+(h_{j}-h_{i}))-p(t+(h_{k}-h_{i}))\bigg)f\left(\|p(t+(h_{j}-h_{i}))-p(t+(h_{k}-h_{i}))\|^{2}\right)\nonumber\\
      &-\bigg(p(t-(h_{j}-h_{k}))-p(t+(h_{k}-h_{i}))\bigg)f\left(\|p(t-(h_{j}-h_{k}))-p(t+(h_{k}-h_{i}))\|^{2}\right)\Bigg),
    \end{align}
    \begin{align}\label{Boom3}
      &(m_{i}-m_{k})\bigg(p(t+h_{k}-h_{i})-p(t)\bigg)\nonumber\\
      &=m_{j}\Bigg(\bigg(p(t+h_{j}-h_{i})-p(t)\bigg)-\bigg(p(t-(h_{j}-h_{k}))-p(t)\bigg)\Bigg),
    \end{align}
    and
    \begin{align}\label{Boom4}
      &(m_{i}-m_{k})\bigg(p(t+(h_{k}-h_{i}))-p(t)\bigg)\nonumber\\
      &=m_{j}\Bigg(\bigg(p(t+(h_{j}-h_{i}))-p(t+(h_{k}-h_{i}))\bigg)-\bigg(p(t-(h_{j}-h_{k}))-p(t+(h_{k}-h_{i}))\bigg)\Bigg).
    \end{align}
    If $p(t+h_{j}-h_{i})-p(t)$ and $p(t-(h_{j}-h_{k}))-p(t)$ are linearly independent, or pointing in the same direction, then for (\ref{Boom1}) and (\ref{Boom3}) to both be true, because $xf(x^{2})$ is a decreasing function, we need that these vectors are the same length, as otherwise $(m_{i}-m_{k})(p(t+h_{k}-h_{i})-p(t))$ has two different directions. By the same argument, working with (\ref{Boom2}) and (\ref{Boom4}) instead, we need that $p(t+(h_{j}-h_{i}))-p(t+(h_{k}-h_{i}))$ and $p(t-(h_{j}-h_{k}))-p(t+(h_{k}-h_{i}))$ have the same length if they are linearly independent, or pointing in the same direction. If for all $t$ at least one of these conditions is not met, then it is easy to see that using the continuity of $p$ all point masses lie on a straight line, which contradicts that they lie on a closed curve by \S 331 of \cite{W}. Thus we find that $p(t+(h_{j}-h_{i}))$ and $p(t-(h_{j}-h_{k}))$ lie on a circle of radius $\|p(t+(h_{k}-h_{i}))-p(t)\|$ around $p(t)$ and we find that $p(t+(h_{j}-h_{i}))$ and $p(t-(h_{j}-h_{k}))$ lie on a circle of radius $\|p(t+(h_{k}-h_{i}))-p(t)\|$ around $p(t+(h_{k}-h_{i}))$. This is only possible if the vectors on the right-hand sides of (\ref{Boom1}), (\ref{Boom2}), (\ref{Boom3}) and (\ref{Boom4}) cancel out against each other, which means that as $p(t+(h_{k}-h_{i}))-p(t)\neq 0$, we have that $m_{i}-m_{k}=0$ for all $i$, $k\in\{1,2,3\}$. This completes the proof.
  \section{Proof of Corollary~\ref{Main Corollary 2}}\label{Section proof of main corollary 2}
  If $q_{1}$,..., $q_{n}$ move along a curve like a choreography and $q_{n+1}=0$ with mass $m$, then, as in the proof of Corollary~\ref{Main Corollary 1}, by (\ref{Equations of motion}) we have for $k\neq n+1$ that
  \begin{align}\label{Corollary 2 Identity 1}
    \ddot{q}_{k}=\sum\limits_{\substack{j=1\\j\neq k}}^{n}m_{j}(q_{j}-q_{k})f\left(\|q_{j}-q_{k}\|^{2}\right)-m q_{k}f\left(\|q_{k}\|^{2}\right).
  \end{align}
  Using that $q_{j}(t)=p(t+h_{j})$ for $j<n+1$ and defining $s=t+h_{k}$, (\ref{Corollary 2 Identity 1}) can be rewritten as
  \begin{align}\label{Corollary 2 Identity 2}
     \ddot{p}(s)+m p(s)f\left(\|p(s)\|^{2}\right)=\sum\limits_{\substack{j=1\\j\neq k}}^{n}m_{j}\bigg(p(s+h_{j}-h_{k})-p(s)\bigg)f\left(\|p(s+h_{j}-h_{k})-p(s)\|^{2}\right).
  \end{align}
  Replacing $s$ with $-t$  in (\ref{Corollary 2 Identity 2}) and subtracting the resulting equation from (\ref{Corollary 2 Identity 2}), using that $p(-u)=\begin{pmatrix}1 & 0\\ 0 & -1\end{pmatrix}p(u)$ for any $u\in\mathbb{R}$ gives
  \begin{align}\label{Corollary 2 Identity 3}
     &\begin{pmatrix}1 & 0\\ 0 & -1\end{pmatrix}\bigg(\ddot{p}(t)+m p(t)f\left(\|p(t)\|^{2}\right)\bigg)\nonumber\\
     &=\begin{pmatrix}1 & 0\\ 0 & -1\end{pmatrix}\sum\limits_{\substack{j=1\\j\neq k}}^{n}m_{j}\bigg(p(t-(h_{j}-h_{k}))-p(t)\bigg)f\left(\|p(t-(h_{j}-h_{k}))-p(t)\|^{2}\right).
  \end{align}
  Multiplying (\ref{Corollary 2 Identity 3}) on both sides with $\begin{pmatrix}1 & 0\\ 0 & -1\end{pmatrix}^{-1}$ from the left and subtracting (\ref{Corollary 2 Identity 3}) from (\ref{Corollary 2 Identity 2}) (having replaced $s$ in (\ref{Corollary 2 Identity 2}) with $t$) gives (\ref{Equations of motion choreography result 1}), after which the proof of Lemma~\ref{Lemma 2} can be followed to the letter to obtain (\ref{Equations of motion choreography result 2}), (\ref{Equations of motion choreography result 3}) and (\ref{Equations of motion choreography result 4}), after which we can repeat the proof of Theorem~\ref{Main Theorem 2} to obtain our result. This completes the proof.
  \section{Proof of Theorem~\ref{Main Theorem 3}}\label{Section proof of main theorem 3}
  Let $q_{1}$,...,$q_{n}$ be an equally spaced choreographic solution of (\ref{EquationsOfMotion Curved}). Then by Lemma~\ref{Lemma 3} we have that
  \begin{align*}
       0&=\sum\limits_{j=1}^{n-1}\frac{\left(m_{j+k}-\frac{M}{n}\right)\Bigg(p(t+j)-\sigma\bigg(p(t+j)\odot p(t)\bigg)p(t)\Bigg)}{\Bigg(\sigma -\sigma\bigg(p(t+j)\odot p(t)\bigg)^{2}\Bigg)^{\frac{3}{2}}}
  \end{align*}
  for all $k\in\mathbb{Z}$.
  We again note, as in the proof of Theorem~\ref{Main Theorem 1}, that because $\widehat{e}_{1}$,..., $\widehat{e}_{n}$ span $\mathbb{C}^{n}$, there exist constants $a_{1},...,a_{n-1}$ such that
    \begin{align}\label{Span curved}
      \begin{pmatrix}
        m_{1+k}-\frac{M}{n}\\
        m_{2+k}-\frac{M}{n}\\
        \vdots\\
        m_{n+k}-\frac{M}{n}
      \end{pmatrix}=\sum\limits_{l=1}^{n-1}a_{l}\lambda_{l}^{k}\widehat{e}_{l}.
    \end{align}
    Again, note that by construction $\widehat{e}_{n}$ is orthogonal to $\left(m_{1+k}-\frac{M}{n},m_{2+k}-\frac{M}{n},...,m_{n+k}-\frac{M}{n}\right)^{T}$, so we may exclude $\widehat{e}_{n}$ in the linear combination in (\ref{Span curved}). Finally, analogous to the proof of Theorem~\ref{Main Theorem 1}, we find by (\ref{Span curved}) that proving that $m_{j+k}=\frac{M}{n}$ for all $j$, $k\in\mathbb{Z}$ is equivalent to proving that $a_{l}=0$ for all $l\in\{1,...,n-1\}$, which can be done by proving that
    \begin{align}\label{The L}
       0\neq\sum\limits_{j=1}^{n-1}\lambda_{l}^{j-1}\frac{\Bigg(p(t+j)-\sigma\bigg(p(t+j)\odot p(t)\bigg)p(t)\Bigg)}{\Bigg(\sigma -\sigma\bigg(p(t+j)\odot p(t)\bigg)^{2}\Bigg)^{\frac{3}{2}}}
  \end{align}
  for all $l\in\{1,...,n-1\}$. \\
  If $n=3$, we immediately get that the right-hand side of (\ref{The L}) cannot be real-valued and therefore cannot be zero.
  If $n=4$, we have that $\lambda_{l}=e^{\frac{2\pi i}{4}}$, $\lambda_{l}=e^{\frac{4\pi i}{4}}$, or $\lambda_{l}=e^{\frac{6\pi i}{4}}$. If $\lambda_{l}=e^{\frac{2\pi i}{4}}$, or $\lambda_{l}=e^{\frac{6\pi i}{4}}$, then the imaginary part of the right-hand side of (\ref{The L}) is \begin{align*}\pm \frac{p(t+2)-\sigma\bigg(p(t+2)\odot p(t)\bigg)p(t)}{\Bigg(\sigma -\sigma\bigg(p(t+2)\odot p(t)\bigg)^{2}\Bigg)^{\frac{3}{2}}},\end{align*} which cannot be zero. However, if $\lambda_{l}=e^{\frac{4\pi i}{4}}=-1$, we cannot use a similar argument to prove that the right-hand side of (\ref{The L}) is not zero, which means that $a_{2}$ is not necessarily zero, which translates into the possibility of $m_{1}=m_{3}$ and  $m_{2}=m_{4}$, but not all masses being equal.
  For $n=5$ we have that if (\ref{The L}) does not hold for a certain $l<5$, then
  \begin{align}\label{The NOT L1}
       0=\sum\limits_{j=1}^{4}\lambda_{l}^{j-1}\frac{p(t+j)-\sigma\bigg(p(t+j)\odot p(t)\bigg)p(t)}{\Bigg(\sigma -\sigma\bigg(p(t+j)\odot p(t)\bigg)^{2}\Bigg)^{\frac{3}{2}}}
  \end{align}
  and by Lemma~\ref{Lemma 3}
  \begin{align}\label{Help5}
       0&=\sum\limits_{j=1}^{4}\frac{\left(m_{j+k}-\frac{M}{5}\right)\Bigg(p(t+j)-\sigma\bigg(p(t+j)\odot p(t)\bigg)p(t)\Bigg)}{\Bigg(\sigma -\sigma\bigg(p(t+j)\odot p(t)\bigg)^{2}\Bigg)^{\frac{3}{2}}}.
  \end{align}
  Multiplying (\ref{The NOT L1}) on both sides with $m_{1+k}-\frac{M}{5}$ and subtracting the resulting equation from (\ref{Help5}) gives
  \begin{align}\label{Help5 1}
       0&=\sum\limits_{j=2}^{4}\frac{\left(m_{j+k}-\frac{M}{5}-\lambda_{l}^{j-1}\left(m_{1+k}-\frac{M}{5}\right)\right)\Bigg(p(t+j)-\sigma\bigg(p(t+j)\odot p(t)\bigg)p(t)\Bigg)}{\Bigg(\sigma -\sigma\bigg(p(t+j)\odot p(t)\bigg)^{2}\Bigg)^{\frac{3}{2}}}
  \end{align}
  and taking complex conjugates on both sides of (\ref{Help5 1}) gives
  \begin{align}\label{Help5 2}
       0&=\sum\limits_{j=2}^{4}\frac{\left(m_{j+k}-\frac{M}{5}-\lambda_{5-l}^{j-1}\left(m_{1+k}-\frac{M}{5}\right)\right)\Bigg(p(t+j)-\sigma\bigg(p(t+j)\odot p(t)\bigg)p(t)\Bigg)}{\Bigg(\sigma -\sigma\bigg(p(t+j)\odot p(t)\bigg)^{2}\Bigg)^{\frac{3}{2}}}.
  \end{align}
  Note that
  \begin{align}\label{MassVector1}
    \begin{pmatrix}
      m_{2+k}-\frac{M}{5}-\lambda_{l}\left(m_{1+k}-\frac{M}{5}\right)\\
      m_{3+k}-\frac{M}{5}-\lambda_{l}^{2}\left(m_{1+k}-\frac{M}{5}\right)\\
      m_{4+k}-\frac{M}{5}-\lambda_{l}^{3}\left(m_{1+k}-\frac{M}{5}\right)
    \end{pmatrix}
    \end{align}
    and
    \begin{align}
    \begin{pmatrix}\label{MassVector2}
      m_{2+k}-\frac{M}{5}-\lambda_{5-l}\left(m_{1+k}-\frac{M}{5}\right)\\
      m_{3+k}-\frac{M}{5}-\lambda_{5-l}^{2}\left(m_{1+k}-\frac{M}{5}\right)\\
      m_{4+k}-\frac{M}{5}-\lambda_{5-l}^{3}\left(m_{1+k}-\frac{M}{5}\right)
    \end{pmatrix}
  \end{align}
  are linearly independent if and only if $m_{1+k}-\frac{M}{5}\neq 0$. Additionally, note that using the Euclidean inner product, the vectors
  \begin{align*}
    &\frac{p(t+2)-\sigma\bigg(p(t+2)\odot p(t)\bigg)p(t)}{\Bigg(\sigma -\sigma\bigg(p(t+2)\odot p(t)\bigg)^{2}\Bigg)^{\frac{3}{2}}},\textrm{ }\frac{p(t+3)-\sigma\bigg(p(t+3)\odot p(t)\bigg)p(t)}{\Bigg(\sigma -\sigma\bigg(p(t+3)\odot p(t)\bigg)^{2}\Bigg)^{\frac{3}{2}}}\\
    &\textrm{and }\frac{p(t+4)-\sigma\bigg(p(t+4)\odot p(t)\bigg)p(t)}{\Bigg(\sigma -\sigma\bigg(p(t+4)\odot p(t)\bigg)^{2}\Bigg)^{\frac{3}{2}}}
  \end{align*}
  in (\ref{Help5 1}) and (\ref{Help5 2}) are orthogonal to
  \begin{align*}
    v(t):=\begin{pmatrix}
      1 & 0 & 0 \\
      0 & 1 & 0 \\
      0 & 0 & \sigma
    \end{pmatrix}p(t)
  \end{align*}
  and span a two-dimensional space if and only if $v(t)$, $p(t+2)$, $p(t+3)$ and $p(t+4)$ do not lie in the same plane. For $\sigma=-1$, by the geometry of the hyperboloid, we directly see that $v(t)$, $p(t+2)$, $p(t+3)$ and $p(t+4)$ do not lie in the same plane. For $\sigma=1$ we have that $v(t)=p(t)$ and $p(t)$, $p(t+2)$, $p(t+3)$ and $p(t+4)$ may lie in the same plane if that plane goes through the center of the sphere, in which case the point masses $p(t)$, $p(t+2)$, $p(t+3)$ and $p(t+4)$ lie on a great circle. Because $p(t)$, $p(t+2)$, $p(t+3)$ and $p(t+4)$ then lie on a great circle and lie in the same plane, we can write $p(t)=a(t)p(t+2)+b(t)p(t+3)$, where $a$ and $b$ are scalar functions on $\mathbb{R}$, which means that $p(t+1)=a(t+1)p(t+3)+b(t+1)p(t+4)$, so because $p(t+3)$ and $p(t+4)$ lie in the same plane, $p(t+1)$ lies in that plane and all the point masses lie on a great circle. So that means that unless the point masses lie on a great circle, we have that
  \begin{align*}
    &\frac{p(t+2)-\sigma\bigg(p(t+2)\odot p(t)\bigg)p(t)}{\Bigg(\sigma -\sigma\bigg(p(t+2)\odot p(t)\bigg)^{2}\Bigg)^{\frac{3}{2}}},\textrm{ }\frac{p(t+3)-\sigma\bigg(p(t+3)\odot p(t)\bigg)p(t)}{\Bigg(\sigma -\sigma\bigg(p(t+3)\odot p(t)\bigg)^{2}\Bigg)^{\frac{3}{2}}}\\
    &\textrm{and }\frac{p(t+4)-\sigma\bigg(p(t+4)\odot p(t)\bigg)p(t)}{\Bigg(\sigma -\sigma\bigg(p(t+4)\odot p(t)\bigg)^{2}\Bigg)^{\frac{3}{2}}}
  \end{align*}
  are three vectors that span a two-dimensional space (if the point masses do not lie on a great circle), which means that if (\ref{MassVector1}) and (\ref{MassVector2}) are linearly independent, we have a contradiction by (\ref{Help5 1}) and (\ref{Help5 2}). So this means that $m_{1+k}-\frac{M}{5}=0$, which proves that all masses are equal if $n=5$.
  This then finally proves that if $\sigma=-1$, if $n<6$ and $n\neq 4$, all masses are equal and that if $n=4$ the odd labeled masses are equal and the even labeled masses are equal and that the same holds true if $\sigma=1$, unless the point masses all move along a great circle.
  \section{Proof of Corollary~\ref{Main Corollary 3}}\label{Section proof of main corollary 3}
  Let $q_{1},...,q_{n+1}$ be a solution of (\ref{EquationsOfMotion Curved}), where $q_{1},...,q_{n}$ is an equally spaced choreography and $q_{n+1}=(0,0,1)^{T}$ with mass $m$. Let $\widehat{e}=(0,0,1)^{T}$. Then by (\ref{EquationsOfMotion Curved}) we have for $k\in\{1,...,n\}$ that
  \begin{align*}
   &\ddot{p}(t+k)+\sigma\bigg(\dot{p}(t+k)\odot\dot{p}(t+k)\bigg)p(t+k)\\
   &=\sum\limits_{j=1,\textrm{ }j\neq k}^{n}\frac{m_{j}\Bigg(p(t+j)-\sigma\bigg(p(t+k)\odot p(t+j)\bigg)p(t+k)\Bigg)}{\Bigg(\sigma -\sigma\bigg(p(t+k)\odot p(t+j)\bigg)^{2}\Bigg)^{\frac{3}{2}}}\\
   &+\frac{m\Bigg(\widehat{e}-\sigma\bigg(p(t+k)\odot \widehat{e}\bigg)p(t+k)\Bigg)}{\Bigg(\sigma -\sigma\bigg(p(t+k)\odot \widehat{e}\bigg)^{2}\Bigg)^{\frac{3}{2}}},
  \end{align*}
  which, writing $s=t+k$, can be rewritten as
  \begin{align}\label{Final}
   &\ddot{p}(s)+\sigma\bigg(\dot{p}(s)\odot\dot{p}(s)\bigg)p(s)\nonumber\\
   &=\sum\limits_{j=1}^{n-1}\frac{m_{j+k}\Bigg(p(s+j)-\sigma\bigg(p(s)\odot p(s+j)\bigg)p(s)\Bigg)}{\Bigg(\sigma -\sigma\bigg(p(s)\odot p(s+j)\bigg)^{2}\Bigg)^{\frac{3}{2}}}+\frac{m\Bigg(\widehat{e}-\sigma\bigg(p(s)\odot \widehat{e}\bigg)p(s)\Bigg)}{\Bigg(\sigma -\sigma\bigg(p(s)\odot \widehat{e}\bigg)^{2}\Bigg)^{\frac{3}{2}}}.
  \end{align}
  Summing both sides of (\ref{Final}) with respect to $k$ from $1$ to $n$, dividing both sides of the resulting equation by $n$ and subtracting that equation from (\ref{Final}) then gives (\ref{CurvedHelp}), which is the result on which the proof of Theorem~\ref{Main Theorem 3} was based.
   So if $\sigma=-1$, if $n<6$ and $n\neq 4$, then the masses $m_{j}$, $j\in\{1,...,n\}$ are equal and if $n=4$ the odd labeled masses $m_{j}$ are equal and the even labeled masses $m_{j}$ are equal, $j\in\{1,...,n\}$ and the same holds true for $\sigma=1$, unless the point masses all move along a great circle.
   \section{Acknowledgements}
   The author would like to thank Dr. Pascal Grange and Dr. Yinna Ye of Xi'an Jiaotong-Liverpool University for their useful feedback during the final stage of the writing of this paper.

\end{document}